\newcommand {\debeq}	{\begin{eqnarray*}}
\newcommand {\fineq}	{\end{eqnarray*}}
\newcommand {\lbd}	{\lambda}
\newcommand     {\eps}  {\epsilon}
\newcommand     {\vareps}       {\varepsilon}
\newcommand	{\intgen}	
{\int_0^\infty}
\newcommand	{\PP}{\mathbb{P}}
\newcommand	{\EE}{\mathbb{E}}
\newcommand {\cadlag}	{c{\`a}dl{\`a}g}
\newtheorem{thm}{Theorem}
\newtheorem{lem}[thm]{Lemma}
\newtheorem {rem}{Remark}
\newtheorem{prop}[thm]{Proposition}
\newtheorem{cor}[thm]{Corollary}
\newcommand	{\indic}	[1]
{{\bf{1}}_{\{#1\}}}
\begin{document}

\title{Splitting trees stopped when the first clock rings and Vervaat's transformation}
\author{Amaury Lambert$^{1}$and Pieter Trapman$^{2}$}
\footnotetext[1]{Laboratoire de Probabilit{\'e}s et Mod{\`e}les Al{\'e}atoires UMR 7599 CNRS,  Case courrier 188, UPMC Univ Paris 06, 4, Place Jussieu, F-75252 Paris Cedex 05, France}
\footnotetext[2]{Stockholm University, Department of Mathematics, 106 91 Stockholm, Sweden.}
\date{\today}
\maketitle

\begin{abstract}
\noindent
We consider a branching population where individuals have i.i.d.\ life lengths (not necessarily exponential) and constant birth rate. We let $N_t$ denote the population size at time $t$. 
We further assume that all individuals, at birth time, are equipped with  independent exponential clocks with parameter $\delta$. We are interested in the genealogical tree stopped at the first time $T$ when one of those clocks rings. This question has applications in epidemiology, in population genetics, in ecology and in queuing theory.

We show that conditional on $\{T<\infty\}$, the joint law of $(N_T, T, X^{(T)})$, where $X^{(T)}$ is the jumping contour process of the tree truncated at time $T$, is equal to that of $(M, -I_M, Y_M')$ conditional on $\{M\not=0\}$, where : $M+1$ is the number of visits of 0, before some single independent exponential clock $\mathbf{e}$ with parameter $\delta$ rings, by some specified L{\'e}vy process $Y$ without negative jumps reflected below its supremum; $I_M$ is the infimum of the path $Y_M$ defined as $Y$ killed at its last 0 before $\mathbf{e}$; $Y_M'$ is the Vervaat transform of $Y_M$.

This identity yields an explanation for the geometric distribution of $N_T$ \cite{K,T} and has numerous other applications. In particular, conditional on $\{N_T=n\}$, and also on $\{N_T=n, T<a\}$, the ages and residual lifetimes of the $n$ alive individuals at time $T$ are i.i.d.\ and independent of $n$. We provide explicit formulae for this distribution and give a more general application to outbreaks of antibiotic-resistant bacteria in the hospital.


\end{abstract}  	

\section{Introduction}

We consider a population of particles behaving independently from one another, where each particle gives birth at constant rate $b>0$ during its lifetime (inter-birth durations are i.i.d.\ exponential random variables with parameter $b$), and where lifetime durations are i.i.d.\ on $(0,+\infty]$ (some particles may have infinite lifetimes) with probability distribution $\mu$ (not necessarily exponential). 
 
The genealogical trees  that we consider here are usually called \emph{splitting trees} \cite{GK}. We define the \emph{lifespan measure} as the measure on $(0,+\infty]$ with total mass $b$ simply defined as $\pi:=b\mu$. 
 
The process $(N_t;t\ge 0)$ giving the number of extant particles at time $t$, belongs to a wide class of branching processes called \emph{Crump--Mode--Jagers processes}. Actually, the processes we consider are homogeneous (constant birth rate) and binary (one birth at a time) but are more general than classical (simple) 
 birth--death processes \cite{Grim92} 
 in that the lifetime durations may follow a general distribution. 

In addition, we assume that each particle is independently equipped with a random exponential clock with parameter $\delta>0$. We are interested in the first time $T$ when one of those clocks rings, called \emph{detection time}. See Figure \ref{fig:1} for a realisation of a splitting tree with individual clocks. Note that on the extinction event, $T$ can be infinite (no clock rings) with positive probability. 

\footnotetext{Keywords: branching process; splitting tree; Crump--Mode--Jagers process; contour process; L{\'e}vy process; scale function; resolvent; age and residual lifetime; undershoot and overshoot; Vervaat's transformation; sampling; detection; epidemiology; processor-sharing.}
\footnotetext{2000 Mathematical Subject Classification: Primary: 60J80; Secondary: 92D10; 92D25; 92D30; 92D40; 60J85; 60G17; 60G51; 60G55; 60K15; 60K25.}

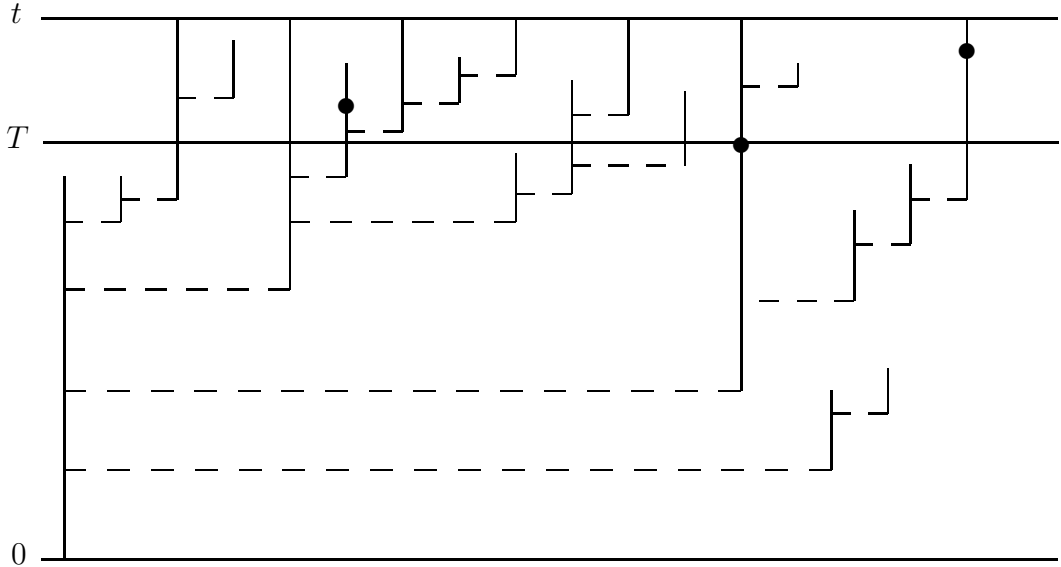
\begin{figure}[ht]
\label{fig:1}
\unitlength 3mm 
\linethickness{0.5pt}
\ifx\plotpoint\undefined\newsavebox{\plotpoint}\fi 
\begin{picture}(48.063,25.25)(3,-2)
\put(6,23){\line(1,0){45}}
\put(6.063,17.5){\line(1,0){45}}
\put(6,-1){\line(1,0){45}}
\put(17,23){\line(0,-1){12}}
\put(37,23){\line(0,-1){16.5}}
\put(19.5,19.125){\circle*{.707}}
\put(37,17.375){\circle*{.707}}
\put(47,21.563){\circle*{.707}}
\put(16.971,10.971){\line(-1,0){.9091}}
\put(15.153,10.971){\line(-1,0){.9091}}
\put(13.334,10.971){\line(-1,0){.9091}}
\put(11.516,10.971){\line(-1,0){.9091}}
\put(9.698,10.971){\line(-1,0){.9091}}
\put(7.88,10.971){\line(-1,0){.9091}}
\put(36.971,6.471){\line(-1,0){.9677}}
\put(35.035,6.471){\line(-1,0){.9677}}
\put(33.1,6.471){\line(-1,0){.9677}}
\put(31.164,6.471){\line(-1,0){.9677}}
\put(29.229,6.471){\line(-1,0){.9677}}
\put(27.293,6.471){\line(-1,0){.9677}}
\put(25.358,6.471){\line(-1,0){.9677}}
\put(23.422,6.471){\line(-1,0){.9677}}
\put(21.487,6.471){\line(-1,0){.9677}}
\put(19.551,6.471){\line(-1,0){.9677}}
\put(17.616,6.471){\line(-1,0){.9677}}
\put(15.68,6.471){\line(-1,0){.9677}}
\put(13.745,6.471){\line(-1,0){.9677}}
\put(11.809,6.471){\line(-1,0){.9677}}
\put(9.874,6.471){\line(-1,0){.9677}}
\put(7.938,6.471){\line(-1,0){.9677}}
\put(14.5,19.5){\line(0,1){2.5}}
\put(14.471,19.471){\line(-1,0){.8333}}
\put(12.804,19.471){\line(-1,0){.8333}}
\put(16.971,13.971){\line(1,0){.9091}}
\put(18.789,13.971){\line(1,0){.9091}}
\put(20.607,13.971){\line(1,0){.9091}}
\put(22.425,13.971){\line(1,0){.9091}}
\put(24.243,13.971){\line(1,0){.9091}}
\put(26.062,13.971){\line(1,0){.9091}}
\put(27,14){\line(0,1){3}}
\put(21.971,17.971){\line(-1,0){.8333}}
\put(20.304,17.971){\line(-1,0){.8333}}
\put(22,23){\line(0,-1){5}}
\put(32,23){\line(0,-1){4.25}}
\put(29.5,20.25){\line(0,-1){5}}
\put(29.471,15.221){\line(-1,0){.8333}}
\put(27.804,15.221){\line(-1,0){.8333}}
\put(31.971,18.721){\line(-1,0){.8333}}
\put(30.304,18.721){\line(-1,0){.8333}}
\put(26.971,20.471){\line(-1,0){.8333}}
\put(25.304,20.471){\line(-1,0){.8333}}
\put(27,23){\line(0,-1){2.5}}
\put(24.5,21.25){\line(0,-1){2}}
\put(24.471,19.221){\line(-1,0){.8333}}
\put(22.804,19.221){\line(-1,0){.8333}}
\put(29.471,16.471){\line(1,0){.8333}}
\put(31.137,16.471){\line(1,0){.8333}}
\put(32.804,16.471){\line(1,0){.8333}}
\put(34.5,16.5){\line(0,1){3.25}}
\put(36.971,19.971){\line(1,0){.8333}}
\put(38.637,19.971){\line(1,0){.8333}}
\put(39.5,20){\line(0,1){1}}
\put(44.5,16.5){\line(0,-1){3.5}}
\put(44.471,14.971){\line(1,0){.8333}}
\put(46.137,14.971){\line(1,0){.8333}}
\put(47,23){\line(0,-1){8}}
\put(4.875,23.25){\makebox(0,0)[cc]{$t$}}
\put(4.938,17.75){\makebox(0,0)[cc]{$T$}}
\put(12,23){\line(0,-1){8}}
\put(9.5,16){\line(0,-1){2}}
\put(42,14.5){\line(0,-1){4}}
\put(11.971,14.971){\line(-1,0){.8333}}
\put(10.304,14.971){\line(-1,0){.8333}}
\put(9.471,13.971){\line(-1,0){.8333}}
\put(7.804,13.971){\line(-1,0){.8333}}
\put(44.471,12.971){\line(-1,0){.8333}}
\put(42.804,12.971){\line(-1,0){.8333}}
\put(41.971,10.471){\line(-1,0){.8333}}
\put(40.304,10.471){\line(-1,0){.8333}}
\put(38.637,10.471){\line(-1,0){.8333}}
\put(19.5,21){\line(0,-1){5}}
\put(19.471,15.971){\line(-1,0){.8333}}
\put(17.804,15.971){\line(-1,0){.8333}}
\put(6.971,2.971){\line(1,0){.9722}}
\put(8.915,2.971){\line(1,0){.9722}}
\put(10.86,2.971){\line(1,0){.9722}}
\put(12.804,2.971){\line(1,0){.9722}}
\put(14.748,2.971){\line(1,0){.9722}}
\put(16.693,2.971){\line(1,0){.9722}}
\put(18.637,2.971){\line(1,0){.9722}}
\put(20.582,2.971){\line(1,0){.9722}}
\put(22.526,2.971){\line(1,0){.9722}}
\put(24.471,2.971){\line(1,0){.9722}}
\put(26.415,2.971){\line(1,0){.9722}}
\put(28.36,2.971){\line(1,0){.9722}}
\put(30.304,2.971){\line(1,0){.9722}}
\put(32.248,2.971){\line(1,0){.9722}}
\put(34.193,2.971){\line(1,0){.9722}}
\put(36.137,2.971){\line(1,0){.9722}}
\put(38.082,2.971){\line(1,0){.9722}}
\put(40.026,2.971){\line(1,0){.9722}}
\put(41,3){\line(0,1){3.5}}
\put(43.5,7.5){\line(0,-1){2}}
\put(43.471,5.471){\line(-1,0){.8333}}
\put(41.804,5.471){\line(-1,0){.8333}}
\put(7,16){\line(0,-1){17}}
\put(5,-.75){\makebox(0,0)[cc]{$0$}}
\end{picture}
\caption{A realisation of a splitting tree with individual exponential clocks. Time flows from bottom to top; horizontal dashed lines show filiation. Solid dots show individual ringing clocks. The time $T$ when the first clock rings is indicated. Here $N_T=7$.}

\end{figure}

This question has applications in population genetics and in ecology \cite{CL1, CL2, L2, L-JMB} ($T$ is then the first time when a new mutant or a new species arises), in queuing theory \cite{K, Grishechkin, LSZ} (because $N$ is a time-changed processor-sharing queue, and then in the new timescale, $T$ is a single, independent exponential clock), and in epidemiology \cite{BWTB, T} ($T$ is then the first detection time of the epidemics). In this last setting, ages of individuals in the population at $T$ are the times since infection of infectives in the detected outbreak, and in the last section we see how this information can be enlarged with more easily available data such as the length of stay in the hospital up to time $T$. 

Our main result is to characterise, on the event $\{T<\infty\}$, the joint law of $(N_T, T, X^{(T)})$, where $X^{(T)}$ is the jumping contour process of the tree truncated at time $T$, in terms of the Vervaat transform of the path of the (reflected) L{\'e}vy process $X$ with jump measure $\pi$ and slope $-1$. In particular, we recover the known fact \cite{K,T} that conditional on $\{T<\infty\}$, $N_T$ is geometrically distributed, and we characterise the joint law of $T$ and $N_T$ in terms of (joint) Laplace transforms of some hitting times of $X$. As a further example, restricting the main identity to the undershoots and overshoots of $X$ whenever it crosses 0, we get the following application. Conditional on $\{N_T=n\}$, and also on $\{N_T=n, T<a\}$, the ages and residual lifetimes of the $n$ alive individuals at time $T$ are i.i.d.\ and independent of $n$ and follow the bivariate law of $(Und, Ove)$ (resp. $(Und_a, Ove_a)$) defined hereafter. The pair $(Und, Ove)$ (resp. $(Und_a, Ove_a)$) is the undershoot and overshoot of the jump across $0$ of $X$, at its first hitting time $\tau_0^+$ of $(0,+\infty]$, conditional on $\tau_0^+$ smaller than some independent exponential time with parameter $\delta$ (resp. and $\inf_{0\le s \le \tau_0^+} X_s >-a$). In the epidemics model, these statements are extended by taking into account, in addition to the age and residual lifetime (of individual infection at time $T$), the length of stay in the hospital up to infection time. In all cases, explicit formulae are also provided for these laws.


\section{Splitting trees and L{\'e}vy processes}

We assume that splitting trees are started with one unique progenitor born at time 0. We denote by $\PP$ their law, and the subscript $s$ in $\PP_s$ means conditioning on the lifetime of the progenitor being $s$. Of course if $\PP$ bears no subscript, this means that the lifetime of the progenitor follows the usual distribution $\mu$.

In \cite{L}, for $t>0$, the first author has 
considered the so-called jumping chronological contour process (JCCP), here denoted $X^{(t)}$, of the splitting tree truncated up to height (time) $t$, which starts at $s\wedge t$ (here and in what follows $x \wedge y$ 
denotes the minimum of $x$ and $y$), where $s$ is the time of death 
of the progenitor, visits all existence times (smaller than $t$) of all individuals exactly once and terminates at 0 (see Figure \ref{fig : jccp}).

\begin{figure}[ht]
\unitlength 1.9mm 
\linethickness{0.2pt}
\begin{picture}(67.13,70)(0,0)
\thicklines
\put(7,40){\line(0,1){12}}
\thinlines
\put(10,49){\line(1,0){.07}}
\put(6.93,48.93){\line(1,0){.75}}
\put(8.43,48.93){\line(1,0){.75}}
\put(0,0){}
\thicklines
\put(10,49){\line(0,1){8}}
\thinlines
\put(13,55){\line(1,0){.07}}
\put(9.93,54.93){\line(1,0){.75}}
\put(11.43,54.93){\line(1,0){.75}}
\put(0,0){}
\thicklines
\put(13,55){\line(0,1){5}}
\thinlines
\put(15,53){\line(1,0){.07}}
\put(9.93,52.93){\line(1,0){.833}}
\put(11.6,52.93){\line(1,0){.833}}
\put(13.26,52.93){\line(1,0){.833}}
\put(0,0){}
\thicklines
\put(15,53){\line(0,1){4}}
\put(18,54){\line(0,1){5}}
\put(24,44){\line(0,1){6}}
\thinlines
\put(27,48){\line(1,0){.07}}
\put(23.93,47.93){\line(1,0){.75}}
\put(25.43,47.93){\line(1,0){.75}}
\put(0,0){}
\thicklines
\put(27,48){\line(0,1){15}}
\thinlines
\put(30,61){\line(1,0){.07}}
\put(26.93,60.93){\line(1,0){.75}}
\put(28.43,60.93){\line(1,0){.75}}
\put(0,0){}
\thicklines
\put(30,61){\line(0,1){3}}
\thinlines
\put(18,54){\line(1,0){.07}}
\put(14.93,53.93){\line(1,0){.75}}
\put(16.43,53.93){\line(1,0){.75}}
\put(0,0){}
\put(33,62){\line(1,0){.07}}
\put(29.93,61.93){\line(1,0){.75}}
\put(31.43,61.93){\line(1,0){.75}}
\put(0,0){}
\thicklines
\put(33,62){\line(0,1){4}}
\thinlines
\put(36,65){\line(1,0){.07}}
\put(32.93,64.93){\line(1,0){.75}}
\put(34.43,64.93){\line(1,0){.75}}
\put(0,0){}
\thicklines
\put(36,65){\line(0,1){3}}
\thinlines
\put(38,64){\line(1,0){.07}}
\put(32.93,63.93){\line(1,0){.833}}
\put(34.6,63.93){\line(1,0){.833}}
\put(36.26,63.93){\line(1,0){.833}}
\put(0,0){}
\thicklines
\put(38,64){\line(0,1){2}}
\thinlines
\put(35,57){\line(1,0){.07}}
\put(26.93,56.93){\line(1,0){.889}}
\put(28.71,56.93){\line(1,0){.889}}
\put(30.49,56.93){\line(1,0){.889}}
\put(32.26,56.93){\line(1,0){.889}}
\put(34.04,56.93){\line(1,0){.889}}
\put(0,0){}
\thicklines
\put(35,57){\line(0,1){4}}
\thinlines
\put(40,51){\line(1,0){.07}}
\put(26.93,50.93){\line(1,0){.929}}
\put(28.79,50.93){\line(1,0){.929}}
\put(30.64,50.93){\line(1,0){.929}}
\put(32.5,50.93){\line(1,0){.929}}
\put(34.36,50.93){\line(1,0){.929}}
\put(36.22,50.93){\line(1,0){.929}}
\put(38.07,50.93){\line(1,0){.929}}
\put(0,0){}
\thicklines
\put(40,51){\line(0,1){8}}
\thinlines
\put(43,57){\line(1,0){.07}}
\put(39.93,56.93){\line(1,0){.75}}
\put(41.43,56.93){\line(1,0){.75}}
\put(0,0){}
\thicklines
\put(43,57){\line(0,1){4}}
\thinlines
\put(45,55){\line(1,0){.07}}
\put(39.93,54.93){\line(1,0){.833}}
\put(41.6,54.93){\line(1,0){.833}}
\put(43.26,54.93){\line(1,0){.833}}
\put(0,0){}
\thicklines
\put(45,55){\line(0,1){4}}
\thinlines
\put(48,58){\line(1,0){.07}}
\put(44.93,57.93){\line(1,0){.75}}
\put(46.43,57.93){\line(1,0){.75}}
\put(0,0){}
\thicklines
\put(48,58){\line(0,1){7}}
\thinlines
\put(53,61){\line(1,0){.07}}
\put(47.93,60.93){\line(1,0){.833}}
\put(49.6,60.93){\line(1,0){.833}}
\put(51.26,60.93){\line(1,0){.833}}
\put(0,0){}
\put(55,59){\line(1,0){.07}}
\put(47.93,58.93){\line(1,0){.875}}
\put(49.68,58.93){\line(1,0){.875}}
\put(51.43,58.93){\line(1,0){.875}}
\put(53.18,58.93){\line(1,0){.875}}
\put(0,0){}
\thicklines
\put(55,59){\line(0,1){3}}
\thinlines
\put(57,52){\line(1,0){.07}}
\put(39.93,51.93){\line(1,0){.944}}
\put(41.82,51.93){\line(1,0){.944}}
\put(43.71,51.93){\line(1,0){.944}}
\put(45.6,51.93){\line(1,0){.944}}
\put(47.49,51.93){\line(1,0){.944}}
\put(49.37,51.93){\line(1,0){.944}}
\put(51.26,51.93){\line(1,0){.944}}
\put(53.15,51.93){\line(1,0){.944}}
\put(55.04,51.93){\line(1,0){.944}}
\put(0,0){}
\thicklines
\put(57,52){\line(0,1){5}}
\thinlines
\put(60,55){\line(1,0){.07}}
\put(56.93,54.93){\line(1,0){.75}}
\put(58.43,54.93){\line(1,0){.75}}
\put(0,0){}
\thicklines
\put(60,55){\line(0,1){3}}
\thinlines
\put(62,46){\line(1,0){.07}}
\put(23.93,45.93){\line(1,0){.974}}
\put(25.88,45.93){\line(1,0){.974}}
\put(27.83,45.93){\line(1,0){.974}}
\put(29.78,45.93){\line(1,0){.974}}
\put(31.72,45.93){\line(1,0){.974}}
\put(33.67,45.93){\line(1,0){.974}}
\put(35.62,45.93){\line(1,0){.974}}
\put(37.57,45.93){\line(1,0){.974}}
\put(39.52,45.93){\line(1,0){.974}}
\put(41.47,45.93){\line(1,0){.974}}
\put(43.42,45.93){\line(1,0){.974}}
\put(45.37,45.93){\line(1,0){.974}}
\put(47.31,45.93){\line(1,0){.974}}
\put(49.26,45.93){\line(1,0){.974}}
\put(51.21,45.93){\line(1,0){.974}}
\put(53.16,45.93){\line(1,0){.974}}
\put(55.11,45.93){\line(1,0){.974}}
\put(57.06,45.93){\line(1,0){.974}}
\put(59.01,45.93){\line(1,0){.974}}
\put(60.96,45.93){\line(1,0){.974}}
\put(0,0){}
\thicklines
\put(62,46){\line(0,1){3}}
\thinlines
\put(24,44){\line(1,0){.07}}
\put(6.93,43.93){\line(1,0){.944}}
\put(8.82,43.93){\line(1,0){.944}}
\put(10.71,43.93){\line(1,0){.944}}
\put(12.6,43.93){\line(1,0){.944}}
\put(14.49,43.93){\line(1,0){.944}}
\put(16.37,43.93){\line(1,0){.944}}
\put(18.26,43.93){\line(1,0){.944}}
\put(20.15,43.93){\line(1,0){.944}}
\put(22.04,43.93){\line(1,0){.944}}
\put(0,0){}
\put(20,50){\line(1,0){.07}}
\put(9.93,49.93){\line(1,0){.909}}
\put(11.75,49.93){\line(1,0){.909}}
\put(13.57,49.93){\line(1,0){.909}}
\put(15.38,49.93){\line(1,0){.909}}
\put(17.2,49.93){\line(1,0){.909}}
\put(19.02,49.93){\line(1,0){.909}}
\put(0,0){}
\thicklines
\put(20,50){\line(0,1){3}}
\thinlines
\put(22,52){\line(1,0){.07}}
\put(19.93,51.93){\line(1,0){.667}}
\put(21.26,51.93){\line(1,0){.667}}
\put(0,0){}
\thicklines
\put(22,52){\line(0,1){2}}
\thinlines
\put(4,40){\vector(0,1){30}}
\put(2.941,39.941){\line(1,0){.9866}}
\put(4.915,39.941){\line(1,0){.9866}}
\put(6.888,39.941){\line(1,0){.9866}}
\put(8.861,39.941){\line(1,0){.9866}}
\put(10.834,39.941){\line(1,0){.9866}}
\put(12.808,39.941){\line(1,0){.9866}}
\put(14.781,39.941){\line(1,0){.9866}}
\put(16.754,39.941){\line(1,0){.9866}}
\put(18.727,39.941){\line(1,0){.9866}}
\put(20.701,39.941){\line(1,0){.9866}}
\put(22.674,39.941){\line(1,0){.9866}}
\put(24.647,39.941){\line(1,0){.9866}}
\put(26.62,39.941){\line(1,0){.9866}}
\put(28.593,39.941){\line(1,0){.9866}}
\put(30.567,39.941){\line(1,0){.9866}}
\put(32.54,39.941){\line(1,0){.9866}}
\put(34.513,39.941){\line(1,0){.9866}}
\put(36.486,39.941){\line(1,0){.9866}}
\put(38.46,39.941){\line(1,0){.9866}}
\put(40.433,39.941){\line(1,0){.9866}}
\put(42.406,39.941){\line(1,0){.9866}}
\put(44.379,39.941){\line(1,0){.9866}}
\put(46.353,39.941){\line(1,0){.9866}}
\put(48.326,39.941){\line(1,0){.9866}}
\put(50.299,39.941){\line(1,0){.9866}}
\put(52.272,39.941){\line(1,0){.9866}}
\put(54.245,39.941){\line(1,0){.9866}}
\put(56.219,39.941){\line(1,0){.9866}}
\put(58.192,39.941){\line(1,0){.9866}}
\put(60.165,39.941){\line(1,0){.9866}}
\put(62.138,39.941){\line(1,0){.9866}}
\put(64.112,39.941){\line(1,0){.9866}}
\put(66.085,39.941){\line(1,0){.9866}}
\thicklines
\put(53,61){\line(0,1){2}}
\thinlines
\put(50,63){\line(1,0){.07}}
\put(47.93,62.93){\line(1,0){.667}}
\put(49.26,62.93){\line(1,0){.667}}
\put(0,0){}
\thicklines
\put(50,63){\line(0,1){3}}
\thinlines
\put(52,64){\line(1,0){.07}}
\put(49.93,63.93){\line(1,0){.667}}
\put(51.26,63.93){\line(1,0){.667}}
\put(0,0){}
\thicklines
\put(52,64){\line(0,1){3}}

\put(4,17){\line(1,-1){3}}
\multiput(19,18)(.02777778,-.02777778){36}{\line(0,-1){.02777778}}
\put(20,19){\line(1,-1){10}}
\multiput(30,15)(.02777778,-.02777778){72}{\line(0,-1){.02777778}}
\put(53,14){\line(1,-1){9}}
\put(7,68.875){\makebox(0,0)[cc]{a)}}
\put(7,33.75){\makebox(0,0)[cc]{b)}}
\thinlines
\put(3,56){\line(1,0){62}}
\put(1.375,56.5){\makebox(0,0)[cc]{$t$}}
\multiput(7,21)(.02777778,-.02777778){36}{\line(0,-1){.02777778}}
\put(8,21){\line(1,-1){3}}
\multiput(11,21)(.02777778,-.02777778){72}{\line(0,-1){.02777778}}
\put(13,21){\line(1,-1){6}}
\put(32,21){\line(1,-1){5}}
\multiput(37,21)(.02777778,-.02777778){36}{\line(0,-1){.02777778}}
\put(38,21){\line(1,-1){4}}
\multiput(42,21)(.02777778,-.02777778){36}{\line(0,-1){.02777778}}
\put(43,21){\line(1,-1){10}}
\put(3,5){\vector(1,0){62}}
\put(4,5){\vector(0,1){30}}
\put(3,21){\line(1,0){55}}
\put(1.5,21.5){\makebox(0,0)[cc]{$t$}}
\end{picture}
\caption{a) A splitting tree; b) The jumping chronological contour process associated with the same splitting tree after truncation at time $t$. Here $N_t=9$.}
\label{fig : jccp}
\end{figure}
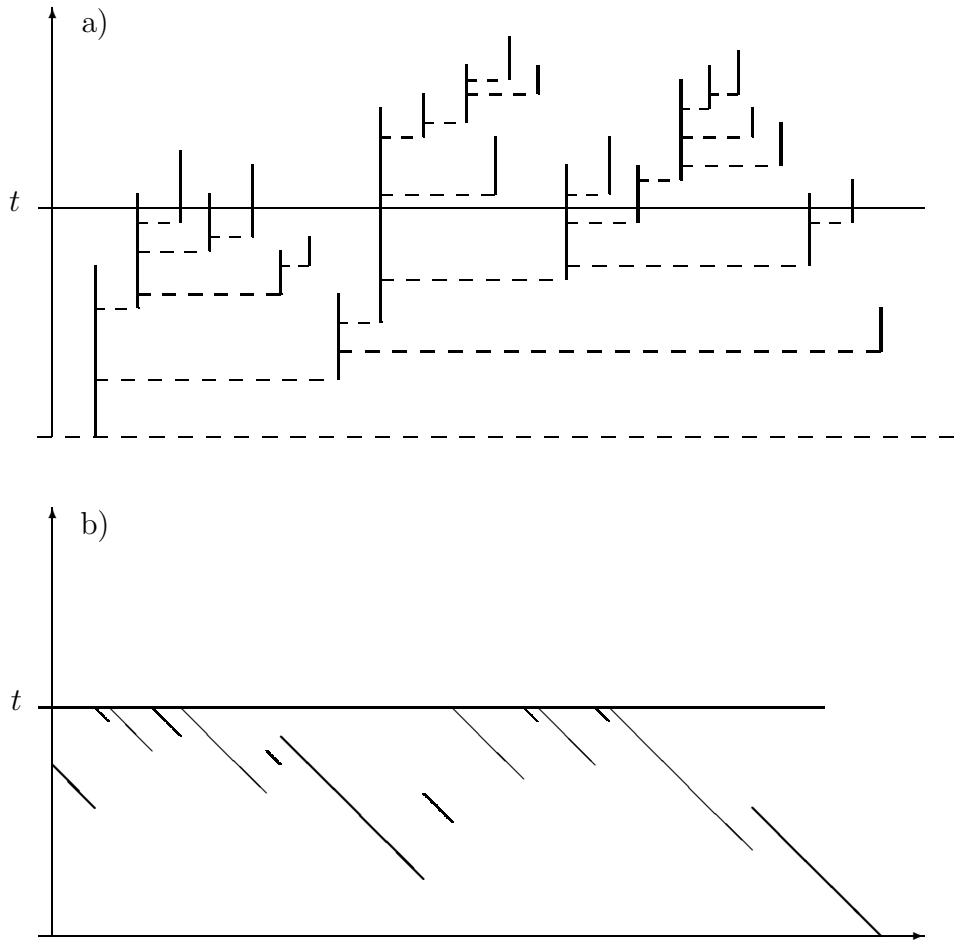

He has shown \cite[Theorem 4.3]{L} that the JCCP is a Markov process, more specifically, it has the same law as the compound Poisson process $X$ with jump measure $\pi$, compensated at rate $-1$, reflected below $t$, and killed upon hitting 0. 

We denote the law of $X$ by $P$, to make the difference with the law $\PP$ of the CMJ process. As seen previously, we record the lifetime duration, say $s$, of the progenitor, by writing $P_s$ for its conditional law on $X_0=s$. To stick to the analogous notation for $\PP$, it will be implicit in the absence of subscript that $X_0$ under $P$ has probability distribution $\mu$.

Let us be a little more specific about the JCCP. Recall that this process visits all existence times of all individuals of the truncated tree. For any individual of the tree, we denote by $\alpha$ its birth time and by $\omega$ its death time. When the visit of an individual $v$ with lifespan $(\alpha(v), \omega(v)]$ begins, the value of the JCCP is $\omega(v)$. The JCCP then visits all the existence times of $v$'s lifespan at constant speed $-1$. If $v$ has no child, then this visit lasts exactly the lifespan of $v$; if $v$ has at least one child, then the visit is interrupted each time a birth time of one of $v$'s daughters, say $w$, is encountered (youngest child first since the visit started at the death level). At this point, the JCCP jumps from $\alpha(w)$ to $\omega(w)\wedge t$  and starts the visit of the existence times of $w$. Since a  truncated tree has finite length, the visit of $v$ has to terminate: it does so at the chronological level $\alpha(v)$ and continues the exploration of the existence times of $v$'s mother, at the height (time) where it had been interrupted.
This procedure then goes on recursively and terminates as soon as $0$ is encountered (birth time of the progenitor). See Figure \ref{fig : jccp} for an example. 

Note that the genealogy of a  splitting tree truncated at time $t$ can be coded by associating each individual with a word of integers, such that $\varnothing$ is the root, 1 is the last daughter of the root born before time $t$, 2 is the next before last daughter of the root,..., 11 is the last daughter of 1 (born before time $t$), and so on. Then the order in which individuals get their first visit by the contour process is the lexicographical order associated with this (so-called Ulam--Harris--Neveu) labelling. Roughly speaking, if $u$ and $v$ are two distinct 
finite words of integers, and that $h_u$ (resp. $h_v$) is the first integer in the word $u$ (resp. $v$) coming immediately after their longest common prefix, then 
$u$ comes first in the lexicographical order if and only if $h_u<h_v$. Here we assume that if $h \neq \varnothing$, then  $h > \varnothing$. 

Since the JCCP is Markovian (as seen earlier, it is a reflected, killed L{\'e}vy process), its excursions between consecutive visits of points at height $t$ are i.i.d.\ excursions of $X$ away from $(t,+\infty]$. Observe in particular that the number of visits of $t$ by $X$ is exactly the number $N_t$ of individuals alive at time $t$.
Therefore, it is easy to see that $N_t$ has a shifted geometric distribution with parameters specified as follows. Let $\tau_A$ denote the first hitting time of the set $A$ by $X$. We also use the following shortcuts 
$$
\tau_{x}:=\tau_{\{x\}}\quad \mbox{ and }\quad \tau_{x}^+:=\tau_{(x,+\infty]},
$$
that is, $\tau_x$ is the first hitting time of $x$ and $\tau_x^+$ is the first hitting time of the open interval
$(x,+\infty]$.
Then conditional on the initial progenitor to have lived $s$ units of time, we have
\begin{equation}
\label{eqn : N_t=0}
\PP_s(N_t=0)=P_s(\tau_0< \tau^+_t),
\end{equation}
and, applying recursively the strong Markov property,
\begin{equation}
\label{eqn : N_t=k condition}
\PP_s(N_t=n \mid N_t\not=0) = P_t(\tau^+_t<\tau_0)^{n-1}P_t(\tau_0< \tau^+_t).
\end{equation}
Note that the subscript $s$ in the last display is useless. Also note that the spatial 
homogeneity of L{\'e}vy processes implies $P_t(\tau_0< \tau^+_t)=P_0(\tau_{-t}< \tau^+_0)$.

In addition, exact formulae can be deduced for \eqref{eqn : N_t=0} and \eqref{eqn : N_t=k condition} from the fact that the JCCP is a L{\'e}vy process with no negative jumps, using scale functions of the L{\'e}vy process $X$. This part is developed in the final section of the paper. 

Later, we see that that the population size is not only (conditionally) geometric at fixed times, but also at the first detection time $T$, using the same decomposition of the contour process into excursions away from $(T,+\infty]$. This decomposition is displayed in Subsection 3.1 of Section 3  `Main results'.  
Subsections 3.2 and 3.3 are devoted to path decompositions providing equalities in law for the whole contour process of the tree stopped at $T$, involving in particular Vervaat's transformation: see Theorems \ref{thm1}, \ref{thm2} and especially \ref{thm3} for the result stated in the abstract. Section 4 focuses on the joint distribution of $T$, $N_T$ and of the ages and residual lifetimes of the $N_T$ alive individuals at time $T$. Subsections 4.2 and 4.3 provide explicit formulae (up to scale functions of the L{\'e}vy process $X$) for these distributions. The reader interested in applications can go straightforward to the last statement of Section 4, Proposition \ref{last}. Finally, Section 5 extends these results to the example of a pathogen outbreak in the hospital modeled by a Crump--Mode--Jagers process with constant transmission rate $b$ and i.i.d.\ (infection) lifetimes, but also taking into account the length of stay in the hospital up to infection.

In the rest of the paper, we always use the following notation, where $E$ can be any expectation operator, $A,B$ any events and $Z$ any (positive or integrable) random variable $E(Z,A,B) := E(Z \mathbbm{1}_{A\cap B}).$

\subsection{Intuition for the geometric distribution}
In this section, we show how to gain insight from the equivalence of the splitting tree and the corresponding contour process, as visualised in Figure \ref{fig : jccp}, and in particular to give the intuition why the number of individuals at the first detection time is geometrically distributed \cite{K,T}.  This intuition also gives the main ideas behind the rigorous proofs below. 


We consider the event that $N_t = n$ ($n>0$) and no detection has occurred yet at time $t$, i.e., the event $\{N_t=n,T>t\}$.
This event occurs if the following events successively occur 
\begin{enumerate}
\item
 The L{\'e}vy process $X$ following the contour of the tree truncated below time $t$ starts with a typical jump (distributed according to $\mu$) and hits the interval $(t,\infty]$ before it hits 0 again, and during this time no clock rings. The probability of this event is  $ E(e^{-\delta \tau^+_t}, \tau_t^+ < \tau_0)$;
\item 
the process $X$ started at $t$, makes an excursion ending in the interval $(t,\infty]$ without hitting 0, and no clock rings during this excursion. Since the contour process of the tree truncated below $t$ is started again at $t$, independently from the past, $n-1$  such events occur successively, and each of them occurs independently with probability $E_t(e^{-\delta \tau^+_t},\tau_t^+ < \tau_0)$;
\item 
$X$ starts at $t$ and reaches $0$ before hitting the interval $(t,\infty]$, and during this time no clock rings. This happens with probability $E_t(e^{-\delta \tau_0},\tau_t^+ > \tau_0)$.
\end{enumerate}

The next step is to ``glue'' the path described in the third event above before the path  described in the first event above, into one excursion with infimum equal to 0 and in which no clock rings. This is basically the inverse of Vervaat's transformation, see Figure \ref{fig : vervaat} below, where the inverse is constructed in such a way that the infimum of the whole process is performed during this newly created (first) excursion.

Since $X$ jumps at rate $b$ and has slope $-1$ (and also is translation invariant), multiplying the probability of this concatenated path by $b \ dt$ gives the probability of an excursion of $X$ away from $[0,+\infty)$ without ringing clock and with infimum in $(-t-dt,-t)$. Then,  $b\ \PP(N_t=n,T>t)\ dt$ is the probability that $X$ makes $n$ excursions away from $[0,+\infty)$ without making a clock ring, and that the infimum of the whole path is performed in the first excursion and belongs to $(-t-dt, -t)$. This yields
\begin{multline}
b\ \mathbb{P}(N_t=n, T>t)\ dt = E_0\big(e^{-\delta \tau^+_0},-\inf_{0<s<\tau^+_{0}} X_s \in dt \big) \left(E_0(e^{-\delta \tau^+_0},\tau^+_0<\tau_{-t}) \right)^{n-1}\\
	=E_0\big(e^{-\delta \tau^+_0},-\inf_{0<s<\tau^+_{0}} X_s \in dt \big) \left(E_0\big(e^{-\delta \tau^+_0},-\inf_{0<s<\tau^+_{0}} X_s<t\big) \right)^{n-1}\\
= \frac{1}{n}\frac{d}{dt}\left(E_0\big(e^{-\delta \tau^+_0},-\inf_{0<s<\tau^+_{0}} X_s<t\big) \right)^{n}\ dt.
\end{multline}
Now observe that $$\mathbb{P}(N_t=n, T \in dt) = \delta n\ \mathbb{P}(N_t=n, T>t)\ dt = 
\frac{\delta}{b} \frac{d}{dt}\left(E_0(e^{-\delta \tau^+_0},\tau^+_0<\tau_{-t}) \right)^{n} dt.$$
Integrating over $t$ now gives $$\mathbb{P}(N_T=n, T<\infty) = \frac{\delta}{b}  \left(E_0(e^{-\delta \tau^+_0}) \right)^{n}.$$ 

A little elaboration on this argument also gives that the distributions of the ages and residual lifetimes at time $T$ should be i.i.d.\ and independent of $N_T$. Precise proofs are given in the sections below.

\section{Main general results}

\subsection{Decomposition of the splitting tree at first detection time}
In this subsection, we call any \cadlag\ (continue {\`a} droite, avec des limites {\`a} gauche, i.e., right-continuous with left limits \cite[p.\ 346]{Grim92}) path $\epsilon$ with lifetime $V(\epsilon)\in [0,+\infty]$, \emph{an excursion}. We use the notation $\mathscr E$ for the space of excursions, endowed with Skorokhod's topology and the associated Borel $\sigma$-field.\\

For any time $t>0$, we set $\rho_t$
the first exit time of $(0,t)$ 
and we let $w_0^t$ denote the finite path of the JCCP $X^{(t)}$ killed upon exiting $(0,t)$, that is, $w_0^t:=(X^{(t)}_s;s\le \rho_t)$.
Further, on the event $N_t=n\ge 1$, for $i=1,\ldots, n$, we set $\sigma_i$ the $i$-th hitting time of $t$ by $X^{(t)}$  and we let $w_i^t$ denote the path of the JCCP $X^{(t)}$ between times $\sigma_i$ and $\sigma_{i+1}$, with the convention that $\sigma_{n+1}=\tau_0$, that is,
$$
w_i^t(s):= X^{(t)}_{s+\sigma_i} \qquad s< \sigma_{i+1}- \sigma_{i}.
$$
For $i=0,\ldots,n$, we denote by $\ell_i:= V(w_i^t)=\sigma_{i+1}- \sigma_{i}$ the lifetime of the excursion $w_i^t$ (so that $w_n(\ell_n)=0$), and for $i=0,\ldots,n-1$, we record the size of the jump made by the contour process before reflection by setting $w_i^t(\ell_i)$ equal to the date of death of the individual alive at $t$ visited at time $\sigma_{i+1}$. In particular, $\ell_0$ is the life length of the progenitor, and if $\ell_0>t$, then $w_0^t$ is reduced to the one-point process that maps 0 to $\ell_0$.

In particular, when $t$ is fixed, we know \cite{L} that conditional on $\{N_t=n\}$, 
\begin{itemize}
\item
the excursion $w_0^t$ follows the law 
 of $X$ started at a jump distributed as $\mu$, killed at $\tau_t^+$ and conditioned on $\tau_t^+<\tau_0$;
\item
the excursions $w_i^t$, $i=1,\ldots, n-1$, are i.i.d.\ and follow the law 
of $X$ started at $t$, killed at $\tau_t^+$ and conditioned on $\tau_t^+<\tau_0$;
\item
the excursion $w_n^t$ follows  the law 
of $X$ started at $t$, killed at $\tau_0$ and conditioned on $\tau_0<\tau_t^+$.
\end{itemize}
Now recall that all individuals are equipped with an independent exponential clock with parameter $\delta$, and that the time when the first of those clocks rings is denoted by $T$ and called detection time.

\begin{prop}
\label{prop1}
Let $p:=\PP(T<\infty)$ be the probability that at least one clock rings before extinction of the population. Then
$$
p=E(1-e^{-\delta \tau_0}).
$$
More specifically, 
$$
\PP(N_t= 0, T>t) = \PP(N_t= 0, T=\infty) = E \big(e^{-\delta \tau_0}, \tau_0<\tau_t^+\big)
$$
and for any $n\ge 1$,
$$
\PP(N_t= n, T>t)=
E \big(e^{-\delta \tau_t^+},\tau_t^+ <\tau_0\big)\, \left(E_t\big(e^{-\delta \tau_t^+}, \tau_t^+ <\tau_0\big)\right)^{n-1} \,E_t\big(e^{-\delta \tau_0}, \tau_0 <\tau_t^+\big).
$$
Further, let $n\ge 1$ and $G', G,  F_0, F_1,\ldots, F_{n-1}$ be non-negative, measurable functions on $\mathscr E$. Then
\begin{multline*}
\EE\left(G'(w_t^0) G(w_t^n)\prod_{i=1}^{n-1} F_i(w_t^i),N_T=n, T\in dt\right) =\\ \delta\, n\, dt \,
E\left(G'(X_s;s\le \tau_t^+)\,e^{-\delta \tau_t^+}, \tau_t^+ <\tau_0\right)\times\\
	\times \left\{\prod_{i=1}^{n-1} E_t(F_i(X_s;s\le \tau_t^+)\,e^{-\delta \tau_t^+}, \tau_t^+ <\tau_0)\right\}
	E_t\left(G(X_s;s\le \tau_0)\,e^{-\delta \tau_0}, \tau_0<\tau_t^+\right).
\end{multline*}
\end{prop}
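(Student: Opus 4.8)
The plan is to reduce the whole statement to one elementary fact about the clocks that turns the detection mechanism into a multiplicative functional of the contour process, and then to feed this into the excursion decomposition recalled just above the statement.

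\emph{The key clock computation.} Fix a realisation of the tree truncated below $t$. An individual $v$ born at $\alpha(v)$ fails to trigger a detection before time $t$ exactly when its clock $E_v\sim\mathrm{Exp}(\delta)$ exceeds $\ell_v:=(\omega(v)\wedge t)-\alpha(v)$, the length of the portion of its life spent below $t$; this has conditional probability $e^{-\delta\ell_v}$, and by independence of the clocks $\PP(T>t\mid\text{tree below }t)=\prod_v e^{-\delta\ell_v}=e^{-\delta\sum_v\ell_v}$. The geometric point I would stress is that $\sum_v\ell_v$, the total lifespan below $t$, equals the total contour-time, i.e.\ the hitting time $\tau_0$ of $0$ by $X^{(t)}$, because the JCCP descends at unit speed and visits every existence time below $t$ exactly once, its upward jumps and its reflections at $t$ costing no contour-time. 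Hence $\PP(T>t\mid X^{(t)})=e^{-\delta\tau_0}$, and along the excursion decomposition this factor splits as $\prod_{i=0}^{n}e^{-\delta\ell_i}$, where $\ell_i=V(w_i^t)$ is the duration of the $i$-th excursion, namely $\ell_i=\tau_t^+$ for $0\le i\le n-1$ and $\ell_n=\tau_0$ within the respective excursion.

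\emph{Parts 1 and 2.} For Part 1 I would argue directly on the untruncated tree: $T=\infty$ forces every lifetime to be finite and every clock to outlast its carrier, so $1-p=E(e^{-\delta\tau_0})$ with the convention $e^{-\delta\cdot\infty}=0$ on survival, whence $p=E(1-e^{-\delta\tau_0})$. For Part 2 I would combine the factor $e^{-\delta\tau_0}=\prod_i e^{-\delta\ell_i}$ with the conditional law of the excursions recalled before the statement: given $\{N_t=n\}$ the pieces $w_0^t,\dots,w_n^t$ are independent with the three listed laws (the first started from $\mu$, the others from $t$), so weighting each by its clock factor and cancelling the conditioning constants $P(\tau_t^+<\tau_0)$, $P_t(\tau_t^+<\tau_0)^{n-1}$, $P_t(\tau_0<\tau_t^+)$ against the normaliser $\PP(N_t=n)$ coming from \eqref{eqn : N_t=k condition} leaves precisely the announced product of $E(e^{-\delta\tau_t^+},\tau_t^+<\tau_0)$, $E_t(e^{-\delta\tau_t^+},\tau_t^+<\tau_0)^{n-1}$ and $E_t(e^{-\delta\tau_0},\tau_0<\tau_t^+)$.

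\emph{Part 3 via the infinitesimal ringing rate.} On $\{N_t=n,\ T>t\}$ no clock has rung by time $t$, so by the lack of memory of the exponential the residual clocks of the $n$ individuals alive at $t$ are i.i.d.\ $\mathrm{Exp}(\delta)$; the first of them rings in $[t,t+dt)$ at rate $n\delta$, while births and deaths in that window contribute only at order $dt^2$, so $N_T=N_t=n$ there. This gives, on $\{T>t\}$, the disintegration $\PP(T\in dt\mid\text{tree})=\delta N_t\,e^{-\delta\tau_0}\,dt$, in which the rate $\delta n$ depends on the tree only through $N_t=n$ and hence is independent of the shapes of the excursions. Writing $\Phi:=G'(w_0^t)G(w_n^t)\prod_{i=1}^{n-1}F_i(w_i^t)$, I would obtain
$$\EE(\Phi,\ N_T=n,\ T\in dt)=\delta\,n\,dt\;\EE\big(\Phi\,e^{-\delta\tau_0},\ N_t=n\big),$$
and then expand $e^{-\delta\tau_0}=\prod_i e^{-\delta\ell_i}$ and invoke the conditional independence of the excursions exactly as in Part 2 to reach
$$\EE\big(\Phi\,e^{-\delta\tau_0},\ N_t=n\big)=E\big(G'\,e^{-\delta\tau_t^+},\tau_t^+<\tau_0\big)\prod_{i=1}^{n-1}E_t\big(F_i\,e^{-\delta\tau_t^+},\tau_t^+<\tau_0\big)\,E_t\big(G\,e^{-\delta\tau_0},\tau_0<\tau_t^+\big),$$
which is the claimed identity.

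\emph{Main obstacle.} The delicate part is making the first step rigorous: identifying the product of per-individual clock-survival probabilities with the single multiplicative functional $e^{-\delta\tau_0}$ of the contour process, and checking that it factorises along excursions with exactly the durations $\tau_t^+$ and $\tau_0$. This needs the bookkeeping that each individual's life below $t$ is explored inside a single excursion (so that independence of the clocks transfers to independence across excursions) together with the fact that reflection at $t$ and the upward jumps consume no contour-time. A secondary point requiring care is the $T\in dt$ step, where a standard first-order expansion must justify that, conditionally on the tree up to $t$, the probability of a first ring in $[t,t+dt)$ is $\delta N_t\,dt+o(dt)$ and that $\{N_T=n\}$ may be replaced by $\{N_t=n\}$ at this order.
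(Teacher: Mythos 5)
Your proof is correct and follows essentially the same route as the paper's: identifying the total lifespan in the truncated tree with the contour hitting time $\tau_0$, so that $\PP(T>t\mid X^{(t)})=e^{-\delta \tau_0}$, factorising this weight along the excursion decomposition conditional on $\{N_t=n\}$, and passing from $\{N_t=n,\,T>t\}$ to $\{N_T=n,\,T\in dt\}$ via the detection rate $\delta n$. The obstacles you flag (lifespans matching contour durations, the first-order expansion in $dt$) are exactly the points the paper also treats as immediate, so there is nothing missing relative to its proof.
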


\begin{proof}
In the whole proof, let $\mathbf{e}$ denote an independent exponential r.v.\ with parameter $\delta$.
Since the JCCP has slope $-1$ and jump sizes equal to life lengths, the lifetime $\tau_0$ of the contour process is exactly the sum of the lifespans of all individuals in the population. As a consequence, 
$$
p=P(\tau_0 >\mathbf{e})= E(1-e^{-\delta \tau_0}).
$$
Applying this property to the truncated contour process $X^{(t)}$ and using the path decomposition preceding the statement of the proposition, we get
\begin{eqnarray*}
\PP(N_t= n, T>t)&=& \PP(N_t= n, \tau_0\big(X^{(t)}\big)<\mathbf{e})=\EE\left( e^{-\delta\tau_0\left(X^{(t)}\right)}, N_t= n\right)\\
	&=&\PP(\tau_t^+ <\tau_0)\,E \big(e^{-\delta V(w_0^t)}\big) \, \left(P_t(\tau_t^+ <\tau_0)\,E \big(e^{-\delta V(w_1^t)}\big) \right)^{n-1} \,\times\\
	&\times&
	P_t( \tau_0 <\tau_t^+)\, E\big(e^{-\delta V(w_n^t)}\big),
\end{eqnarray*}
which yields the desired expression. 

Observing that the  detection rate equals $\delta n$ conditional on $\{N_t=n\}$, 
we finally get
\begin{multline*}
\EE\left(G'(w_t^0) G(w_t^n)\prod_{i=1}^{n-1} F_i(w_t^i),N_T=n, T\in dt\right) =\\ \delta\, n\, dt \,\EE\left(G'(w_t^0) G(w_t^n)\prod_{i=1}^{n-1} F_i(w_t^i),N_t=n, T>t\right) ,
\end{multline*}
and the desired equality follows by the same method as previously.
\end{proof}

\begin{rem}
Since the knowledge of the contour of the genealogical tree yields that of the tree itself, the previous proposition characterises the law of the splitting tree stopped at the first detection time (noting that conditional on $N_T=n$, the marked individual is of course uniform among all $n$ alive individuals).
\end{rem}

\subsection{Rephrasing with i.i.d.\ excursions}
In this subsection, $\epsilon$ denotes an excursion \textbf{distributed as} $\mathbf{X}$ \textbf{started at 0 and killed upon hitting} $(0,+\infty]$. Recall that $\epsilon$ only takes negative values, except at $0$ ($\epsilon(0)=0$) and at $V$, since $\epsilon(V)>0$ on the event $\{V<\infty\}$ (on the complementary event, $\eps$ drifts to $-\infty$). 

Set $\jmath(\epsilon):=\inf_s \epsilon(s)$. On the event $\{\jmath\not=-\infty\}$ (which coincides a.s. with $\{V<\infty\}$), we denote by $h(\epsilon)$ the unique time $h$ such that $\epsilon(h-)=\jmath$. 
Also,
we denote by $\epsilon^\leftarrow$ the pre-$h$ process and by $\epsilon^\rightarrow$ the post-$h$ process:
$$
\epsilon^\leftarrow(s):= \epsilon(s)\qquad 0\le s<h(\epsilon),
$$
with $\epsilon^\leftarrow(h(\epsilon)) = \epsilon^\leftarrow(h(\epsilon)-)=\jmath(\epsilon)$ and
$$
\epsilon^\rightarrow(s):= \epsilon(s+h(\epsilon))\qquad 0\le s\le V(\epsilon)-h(\epsilon).
$$
Notice that with positive probability $V(\epsilon)=h(\epsilon)$, so that $\epsilon^\rightarrow$ is then reduced to the one-point process that maps 0 to $ \epsilon(V)$.

Let $n\ge 1$ and $\epsilon_1,\ldots, \epsilon_n$ denote i.i.d.\  excursions  distributed as $\epsilon$. 
Set 
$$
I_n:=\min_k \jmath (\epsilon_k)\quad\mbox{ and }\quad K_n:=\mbox{arg}\min_k \jmath (\epsilon_k).
$$
The next result is a consequence of the  following two lemmas and Proposition \ref{prop1}.
\begin{thm}
\label{thm1}
Let $G, G',F_1,\ldots, F_{n}$ be non-negative, measurable functions on $\mathscr E$. Then 
\begin{multline*}
\EE\left(G'(w_t^0) G(w_t^n)\prod_{i=1}^{n-1} F_i(w_t^i),N_T=n, T\in dt\right) =\\ \frac{\delta}{b} \ .\ 
E\left( G(\epsilon_{K_n}^\leftarrow+t)\,G'(\epsilon_{K_n}^\rightarrow+t)\,\prod_{k\not=K_n} F_{k-K_n \ mod(n)}(\epsilon_k+t)\,\prod_{k=1}^n e^{-\delta V(\epsilon_k)}, -I_n \in dt\right).
\end{multline*}

\end{thm}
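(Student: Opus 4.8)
The left-hand side has already been computed in Proposition \ref{prop1}, so the plan is to start from the right-hand side of the asserted identity and massage it into the right-hand side of Proposition \ref{prop1}. The first step is a symmetrisation over the index $K_n$ of the minimising excursion: I would write the expectation as a sum over $\{K_n=j\}$, $j=1,\ldots,n$, and use that $\epsilon_1,\ldots,\epsilon_n$ are i.i.d. On the infinitesimal event $\{-I_n\in dt\}$ the infimum $\jmath(\epsilon_{K_n})$ lies in $-dt$ while every other $\jmath(\epsilon_k)$ exceeds $-t$, and the chance that a \emph{second} excursion also reaches the slab $-dt$ is $o(dt)$; hence to first order each summand factorises into a contribution of the single excursion $\epsilon_j$ on $\{\jmath(\epsilon_j)\in-dt\}$ and independent contributions of the $\epsilon_k$, $k\neq j$, each on $\{\jmath(\epsilon_k)>-t\}$.

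For the non-minimising excursions the identification is immediate. After the shift by $t$ the event $\{\jmath(\epsilon)>-t\}$ is exactly $\{\tau_t^+<\tau_0\}$ for $X$ started at $t$, the lifetime $V(\epsilon)$ becomes $\tau_t^+$, and $\epsilon+t$ is $(X_s;s\le\tau_t^+)$; so each such factor equals $E_t(F_i(X_s;s\le\tau_t^+)\,e^{-\delta\tau_t^+},\tau_t^+<\tau_0)$, precisely the $F_i$-factor of Proposition \ref{prop1}. As $k$ ranges over $\{1,\ldots,n\}\setminus\{j\}$ the relabelling $k-K_n\bmod n$ sweeps out $\{1,\ldots,n-1\}$, and since the excursions are i.i.d.\ the resulting product of expectations is independent of $j$. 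The $n$ summands are therefore equal, which produces the factor $n$ and turns the prefactor $\delta/b$ into $\delta n/b$.

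It remains to match the contribution of the minimising excursion, decomposed at its infimum, against the two surviving factors of Proposition \ref{prop1}. Because the infimum of $\epsilon$ is attained at the left limit of the jump at time $h$, one has $V(\epsilon)=h+(V-h)$ and $e^{-\delta V}=e^{-\delta h}\,e^{-\delta(V-h)}$, with $h=\tau_0(\epsilon^\leftarrow+t)$ and $V-h=\tau_t^+(\epsilon^\rightarrow+t)$ once the paths are shifted so that $\jmath=-t$. The crux is thus the splitting-at-the-infimum identity
\begin{multline*}
\frac1b\,E\big(G(\epsilon^\leftarrow+t)\,G'(\epsilon^\rightarrow+t)\,e^{-\delta V},\jmath\in -dt\big)=\\
E\big(G'(X_s;s\le\tau_t^+)\,e^{-\delta\tau_t^+},\tau_t^+<\tau_0\big)\; E_t\big(G(X_s;s\le\tau_0)\,e^{-\delta\tau_0},\tau_0<\tau_t^+\big)\,dt,
\end{multline*}
whose two factors on the right are exactly the $w_0^t$- and $w_n^t$-factors of Proposition \ref{prop1}. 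The $1/b$ here is consumed against the prefactor $\delta n/b$ obtained above, leaving $\delta n\,dt$; combined with the $F_i$-factors this reproduces Proposition \ref{prop1} and hence the theorem.

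The heart of the argument, and the step I expect to be the main obstacle, is this last identity, which is where the two accompanying lemmas enter. It asserts, for the L\'evy process $X$ without negative jumps, that the pre-infimum path $\epsilon^\leftarrow$ and the post-infimum path $\epsilon^\rightarrow$ are independent; that the post-infimum path starts with the jump straddling the infimum, whose law after normalisation by $1/b=1/\pi(0,+\infty]$ is the lifespan law $\mu$, so that $\epsilon^\rightarrow+t$ is genuinely distributed as $X$ begun at a $\mu$-jump and run to $\tau_t^+$ on $\{\tau_t^+<\tau_0\}$; and that the (time-reversed) pre-infimum path, carrying the density of $\jmath$ at $-t$, is distributed as $X$ run from $t$ down to $\tau_0$ on $\{\tau_0<\tau_t^+\}$. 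Making this rigorous requires the duality/time-reversal theory for spectrally one-sided L\'evy processes together with a careful treatment of the jump across the infimum and of the density of the infimum, which is precisely what the Vervaat-transform lemmas are designed to deliver.
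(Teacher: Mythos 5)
Your skeleton is exactly the paper's: reduce the theorem to Proposition \ref{prop1} by decomposing the right-hand side over $\{K_n=j\}$, factorising the non-minimising excursions by independence (each contributing $E_t(F_i(X_s;s\le\tau_t^+)\,e^{-\delta\tau_t^+},\tau_t^+<\tau_0)$ on $\{\jmath>-t\}$), and treating the minimising excursion by a splitting-at-the-infimum identity; the relabelling argument that makes the $n$ summands equal and produces the factor $n$ is also the paper's (its second lemma). The gap is that your ``crux'' identity --- which is precisely the paper's first lemma --- is stated but never proved: you defer it to ``duality/time-reversal theory for spectrally one-sided L{\'e}vy processes'' and to ``Vervaat-transform lemmas'' that, in a blind attempt, you do not have. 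Since you yourself identify this as the heart of the argument, leaving it unestablished is a genuine hole rather than a routine omission.

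Moreover, the toolkit you point to is misdirected: the correct argument is elementary. Since $X$ is compound Poisson with drift $-1$ and no negative jumps, the excursion reaches its infimum level \emph{continuously}; on $\{-\jmath\in dt\}$ the pre-infimum path is, to first order, the path stopped at $\tau_{-t}$, the event $\{\tau_{-t}<\tau_0^+\}$ is measurable with respect to that stopped path, and the strong Markov property at $\tau_{-t}$ does all the work: after $\tau_{-t}$ the process must jump within time $dt$ (probability $b\,dt+o(dt)$, jump size of law $\pi(dy)/b$) and thereafter avoid $-t$ before hitting $(0,+\infty]$. This yields directly
\begin{multline*}
E\big(G(\epsilon^\leftarrow)\,G'(\epsilon^\rightarrow),-\jmath\in dt\big)
= E_0\big(G(X_s;s\le\tau_{-t}),\tau_{-t}<\tau_0^+\big)\times\\
\times\, dt\int_{(0,+\infty]}\pi(dy)\,E_{y-t}\big(G'(X_s;s\le\tau_0^+),\tau_0^+<\tau_{-t}\big),
\end{multline*}
which is your identity after shifting by $t$ and absorbing the factors $e^{-\delta h}$ and $e^{-\delta(V-h)}$ into the functionals (your observation that $e^{-\delta V}$ splits multiplicatively at the infimum is correct and is exactly how it is used). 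In particular, no time reversal occurs anywhere: the pre-infimum path, read \emph{forward} in time and shifted by $t$, is already distributed as $X$ run from $t$ down to $\tau_0$ on $\{\tau_0<\tau_t^+\}$, by spatial homogeneity alone; calling it ``time-reversed'' and invoking duality mischaracterises the step and would needlessly complicate the proof. With this lemma supplied, the rest of your argument goes through as written.
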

\begin{rem}
Note that the expression inside the expectation in the rhs has zero probability when one of the excursions has infinite lifetime, that is, when there is some $k$ such that $V(\eps_k)=+\infty$.
\end{rem}
\begin{lem}
Let $G$ and  $G'$ be two non-negative, measurable functions on $\mathscr E$. Then
$$
E( G(\epsilon^\leftarrow)\,G'(\epsilon^\rightarrow), -\jmath \in dt) =
b\, dt\,E_0(G(X_s;s\le \tau_{-t}), \tau_{-t}<\tau_0^+)\,E(G'(X_s-t;s\le \tau_t^+),\tau_t^+<\tau_0).
$$

\end{lem}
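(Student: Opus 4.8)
The plan is to decompose the excursion $\epsilon$ at its overall infimum $\jmath$ and to recognise the right-hand side as the product of the law of the pre-infimum path, an infinitesimal factor $b\,dt$ coming from the jump that creates the infimum, and the law of the post-infimum path. Since $X$ is a compound Poisson process with slope $-1$ and only positive jumps, it creeps downward continuously; hence on $\{\jmath\neq-\infty\}$ the infimum is attained at $h=\tau_{-t}$ (writing $-t:=\jmath$) exactly at the bottom of a downward run, immediately before an upward jump, so that $X_{\tau_{-t}}=-t$ almost surely and $\epsilon^\leftarrow$ is the path of $X$ from $0$ down to its first passage at $-t$.

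First I would condition on $\{\tau_{-t}<\tau_0^+\}$, i.e.\ on $X$ reaching level $-t$ before jumping above $0$. On this event the pre-$\tau_{-t}$ path is precisely $\epsilon^\leftarrow$, contributing the factor $E_0(G(X_s;s\le\tau_{-t}),\tau_{-t}<\tau_0^+)$. By the strong Markov property at $\tau_{-t}$, the post-$\tau_{-t}$ path is an independent copy of $X$ started at $-t$. For $\jmath$ to equal $-t$, this post-path must, after one upward jump, stay strictly above $-t$ until it crosses $0$. Shifting everything up by $t$, the post-jump path becomes a copy of $X$ started from a jump distributed as $\mu$, run until $\tau_t^+$ and conditioned to reach $(t,+\infty]$ before hitting $0$; since $\epsilon^\rightarrow=(\text{this path})-t$, it contributes $E(G'(X_s-t;s\le\tau_t^+),\tau_t^+<\tau_0)$.

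The remaining, crucial point is the factor $b\,dt$ attached to $\{-\jmath\in dt\}$, the weight of the jump creating the infimum. Upon reaching $-t$ the process keeps descending at unit speed; for the global infimum to lie in $(-t-dt,-t)$ it must make an upward jump before descending a further $dt$, an event of probability $b\,dt+o(dt)$, the jump size being distributed as $\mu$ (consistent with the $X_0\sim\mu$ starting value in the third factor). Cleanly, I would invoke the excursion theory of $X$ reflected at its running infimum: because $X$ creeps downward at unit speed, the depth $-\underline X$ serves as local time at the infimum, its inverse is $\tau_{-t}$, and the excursions of $X$ above its infimum form a Poisson point process with intensity $b\,d\ell$ in this depth variable, each excursion launched by a $\mu$-distributed jump and being an independent copy of $X$ started at a $\mu$-jump. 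The pre-infimum part is then the event that every excursion launched before depth $t$ returns to the infimum, i.e.\ $\{\tau_{-t}<\tau_0^+\}$; the factor $b\,dt$ is the Poisson intensity of launching an excursion at depth $t$; and the post-infimum part is the independent excursion launched at depth $t$ and conditioned to escape above $0$, i.e.\ $\{\tau_t^+<\tau_0\}$ after the shift. The three factors then assemble into the claimed product by Fubini, integrating the intensity against $dt$.

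I expect the main obstacle to be the rigorous justification of this splitting at the minimum, simultaneously producing the factor $b\,dt$ and the conditional independence of $\epsilon^\leftarrow$ and $\epsilon^\rightarrow$ given $\jmath=-t$. The strong Markov property at $\tau_{-t}$ delivers the independence and the post-path law, but the $b\,dt$ weight requires identifying the law of the jump at the infimum; the excursion-theoretic description of $X$ reflected at its infimum, with local time equal to the creeping depth and launching intensity $b$, is what makes this precise, and checking that the depth genuinely plays the role of local time with the stated intensity is the technical heart of the argument.
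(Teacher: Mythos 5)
Your proposal is correct and follows essentially the same route as the paper: the paper's proof is precisely the strong Markov property applied at $\tau_{-t}$ (using that $X$ creeps downward, so $\{-\jmath\in dt\}\subset\{\tau_{-t}<\tau_0^+\}$), with the factor $dt\int_{(0,+\infty]}\pi(dy)\,E_{y-t}(G'(X_s;s\le\tau_0^+),\tau_0^+<\tau_{-t})$ accounting for the rate-$b$, $\mu$-distributed jump off the infimum, followed by spatial homogeneity to shift by $t$. Your closing excursion-theoretic justification of the $b\,dt$ weight is a reformulation of this same jump-rate computation rather than a different argument.
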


\begin{proof} Applying the strong Markov property at $\tau_{-t}$ yields
\begin{eqnarray*}
E( G(\epsilon^\leftarrow)\,G'(\epsilon^\rightarrow),-\jmath \in dt)
	&=& E( G(\epsilon^\leftarrow)\,G'(\epsilon^\rightarrow), -\jmath \in dt, \tau_{-t}<\tau_0^+)\\
	&=& E_0(G(X_s;s\le \tau_{-t}), \tau_{-t}<\tau_0^+) \times\\
	&\times&\,dt\,\int_{(0,+\infty]}\pi(dy)\,E_{y-t}(G'(X_s;s\le \tau_0^+),\tau_0^+<\tau_{-t}),
\end{eqnarray*}
which yields the result.
\end{proof}

\begin{lem}
Let $G, G',F_1,\ldots, F_{n}$ be non-negative, measurable functions on $\mathscr E$. Then for $j=1,\ldots,n$, 
\begin{multline}
E\big( G(\epsilon_{K_n}^\leftarrow)\,G'(\epsilon_{K_n}^\rightarrow)\prod_{k\not={K_n}} F_k(\epsilon_k), -I_n \in dt, K_n=j\big) =\\
b\, dt\,E_0(G(X_s;s\le \tau_{-t}), \tau_{-t}<\tau_0^+)\,E(G'(X_s-t;s\le \tau_t^+),\tau_t^+<\tau_0)\,\prod_{k\not=j}E_0(F_k(X_s;s\le \tau_{0}^+), \tau^+_{0}<\tau_{-t}).
\end{multline}

\end{lem}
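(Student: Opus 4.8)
The plan is to reduce the $n$-excursion statement to the single-excursion lemma just established, by exploiting the independence of $\epsilon_1,\ldots,\epsilon_n$. First I would fix the index $j$ and analyse the event $\{-I_n\in dt,\ K_n=j\}$. On this event the $j$-th excursion is the one realising the global infimum, so its own infimum lies in the infinitesimal window $(-t-dt,-t)$, while every other excursion $\epsilon_k$ with $k\neq j$ stays strictly above the level $-t$, i.e.\ $\jmath(\epsilon_k)>-t$. Since $X$ creeps downward continuously (slope $-1$, no negative jumps), the infima $\jmath(\epsilon_k)$ have no atoms, so the event of a tie is null and $\{-I_n\in dt,\ K_n=j\}$ coincides, up to a null set, with $\{-\jmath(\epsilon_j)\in dt\}\cap\bigcap_{k\neq j}\{\jmath(\epsilon_k)>-t\}$. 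Because the integrand factorises—$G(\epsilon_{K_n}^\leftarrow)G'(\epsilon_{K_n}^\rightarrow)$ equals $G(\epsilon_j^\leftarrow)G'(\epsilon_j^\rightarrow)$ on $\{K_n=j\}$ and depends only on $\epsilon_j$, while each $F_k(\epsilon_k)$ depends only on $\epsilon_k$—independence lets me split the expectation into the $j$-th factor times $\prod_{k\neq j}$ of the remaining factors.

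Next I would evaluate each of the two kinds of factors. For the distinguished index $j$, the factor is exactly $E\big(G(\epsilon^\leftarrow)\,G'(\epsilon^\rightarrow),-\jmath\in dt\big)$, to which the preceding (single-excursion) lemma applies verbatim, yielding $b\,dt\,E_0(G(X_s;s\le\tau_{-t}),\tau_{-t}<\tau_0^+)\,E(G'(X_s-t;s\le\tau_t^+),\tau_t^+<\tau_0)$. For each $k\neq j$ I would rewrite the constraint $\jmath(\epsilon_k)>-t$ in terms of $X$: since $\epsilon_k$ is distributed as $X$ started at $0$ and killed upon hitting $(0,+\infty]$ at time $\tau_0^+$, the event that its infimum stays above $-t$ is precisely $\{\tau_0^+<\tau_{-t}\}$, and on that event the whole excursion equals $(X_s;s\le\tau_0^+)$. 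Hence the $k$-th factor equals $E_0(F_k(X_s;s\le\tau_0^+),\tau_0^+<\tau_{-t})$. Assembling the three types of factors reproduces the claimed right-hand side exactly.

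The only genuinely delicate step is the bookkeeping of the argmin event: I must check that conditioning on $K_n=j$ together with the infinitesimal event $-I_n\in dt$ separates cleanly into ``excursion $j$ attains its infimum in $dt$'' and ``all the others have infimum above $-t$,'' with ties contributing nothing. This rests on the absence of atoms in the law of $\jmath(\epsilon)$ near any fixed level, itself a consequence of the continuous downward passage of $X$; granting this, the remainder is a routine combination of the independence of the $\epsilon_k$ and a single invocation of the previous lemma.
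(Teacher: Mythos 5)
Your proposal is correct and follows essentially the same route as the paper: rewrite the event $\{-I_n\in dt,\ K_n=j\}$ as $\{-\jmath(\epsilon_j)\in dt\}\cap\bigcap_{k\neq j}\{\jmath(\epsilon_k)>-t\}$, factor the expectation by independence, identify each factor with $k\neq j$ as $E_0(F_k(X_s;s\le\tau_0^+),\tau_0^+<\tau_{-t})$, and apply the single-excursion lemma to the $j$-th factor. Your explicit justification that ties contribute a null set (atomlessness of the law of $\jmath$ via continuous downward passage) is a point the paper leaves implicit, but it is the same argument.
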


\begin{proof} The expression in the lhs equals
\begin{eqnarray*}
	& &E\big( G(\epsilon_{K_n}^\leftarrow)\,G'(\epsilon_{K_n}^\rightarrow)\prod_{k\not={K_n}} F_k(\epsilon_k), -I_n \in dt, K_n=j \big)\\
&=& 		E\big( G(\epsilon_j^\leftarrow)\,G'(\epsilon_j^\rightarrow), -\jmath(\eps_j) \in dt\big)E\big(\prod_{k\not=j} F_k(\epsilon_k), -\jmath(\eps_k) <t\ \forall k\not=j\big) \\
	&=& E\big( G(\epsilon_j^\leftarrow)\,G'(\epsilon_j^\rightarrow), -\jmath(\eps_j) \in dt\big)\prod_{k\not=j}E_0(F_k(X_s;s\le \tau_{0}^+), \tau^+_{0}<\tau_{-t}),
\end{eqnarray*}
and the conclusion stems from the previous lemma.
\end{proof}

\subsection{Rephrasing with Vervaat's  transformation}
Forgetting about the terminal jump of each excursion (piece of information that actually is useful in the next section), Theorem \ref{thm1} can be expressed in a more elegant way. 

For any \cadlag\ path $Z$ with finite lifetime $V(Z)$ and law locally absolutely continuous w.r.t.\ $X$, we set $I(Z):=\inf Z$ and we define $H(Z)$ as the unique time $t$ such that $Z(t-) = I(Z)$. Finally we let $Z'$ denote Vervaat's transform of $Z$, defined as the path with lifetime  $V(Z)$ such that $Z'(V(Z)) = 0$ and
$$
Z'(s) = Z(s+H(Z) \ mod(V(Z))) - I(Z)\qquad 0\le s < V(Z).
$$

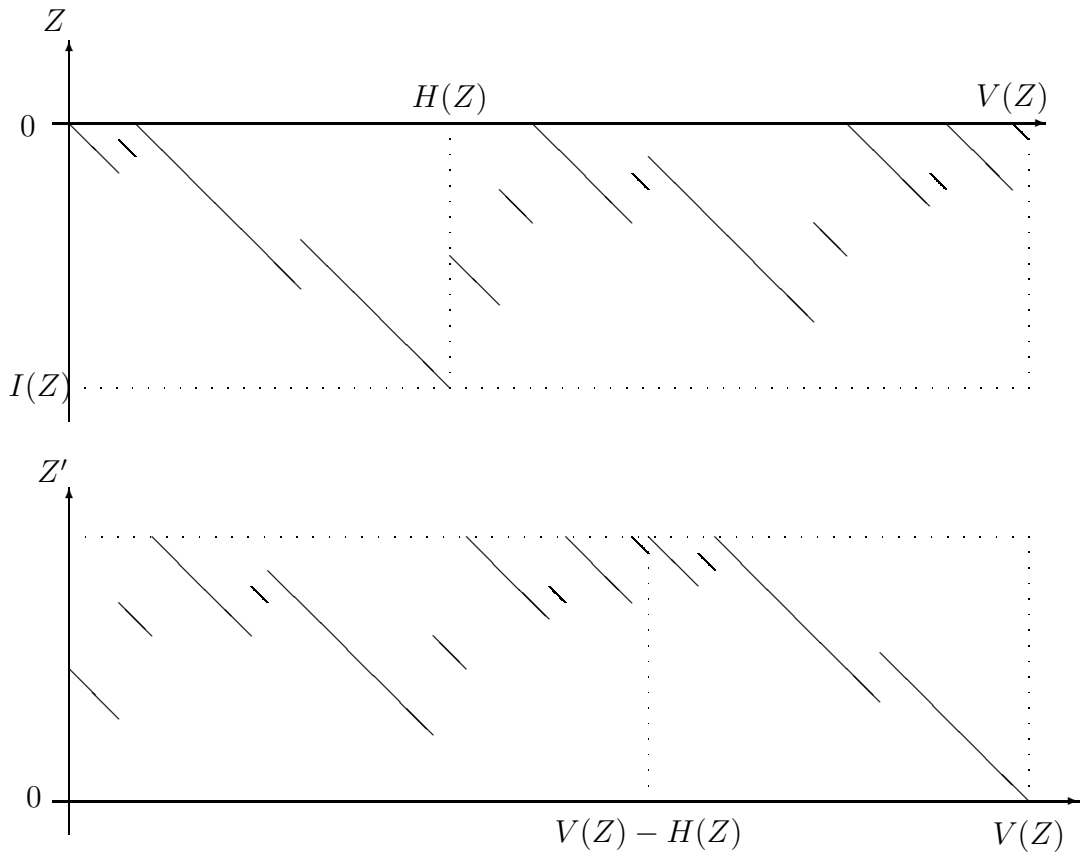
\begin{figure}[ht]
\unitlength 2.2mm 
\linethickness{0.2pt}
\ifx\plotpoint\undefined\newsavebox{\plotpoint}\fi 
\begin{picture}(65,52.25)(0,0)
\put(0,25){}
\put(0,25){}
\put(0,25){}
\put(0,0){}
\put(0,0){}
\put(0,0){}
\put(0,0){}
\put(0,0){}
\put(0,0){}
\put(0,0){}
\put(0,0){}
\put(0,0){}
\put(0,0){}
\put(0,0){}
\put(0,0){}
\put(0,0){}
\put(0,0){}
\put(0,0){}
\put(0,0){}
\put(0,0){}
\put(0,0){}
\put(0,0){}
\put(0,0){}
\put(0,0){}
\thinlines
\put(27,38){\line(1,-1){3}}
\put(4,13){\line(1,-1){3}}
\multiput(38,43)(.008849558,-.008849558){113}{\line(0,-1){.008849558}}
\multiput(15,18)(.008849558,-.008849558){113}{\line(0,-1){.008849558}}
\put(39,44){\line(1,-1){10}}
\put(16,19){\line(1,-1){10}}
\put(49,40){\line(1,-1){2}}
\put(26,15){\line(1,-1){2}}
\put(30,42){\line(1,-1){2}}
\put(7,17){\line(1,-1){2}}
\put(18,39){\line(1,-1){9}}
\put(53,14){\line(1,-1){9}}
\thinlines
\put(4,46){\line(1,-1){3}}
\put(39,21){\line(1,-1){3}}
\put(32,46){\line(1,-1){6}}
\put(9,21){\line(1,-1){6}}
\put(51,46){\line(1,-1){5}}
\put(28,21){\line(1,-1){5}}
\multiput(56,43)(.008849558,-.008849558){113}{\line(0,-1){.008849558}}
\multiput(33,18)(.008849558,-.008849558){113}{\line(0,-1){.008849558}}
\put(57,46){\line(1,-1){4}}
\put(34,21){\line(1,-1){4}}
\multiput(61,46)(.008849558,-.008849558){113}{\line(0,-1){.008849558}}
\multiput(38,21)(.008849558,-.008849558){113}{\line(0,-1){.008849558}}
\put(8,46){\line(1,-1){10}}
\put(43,21){\line(1,-1){10}}
\put(3,5){\vector(1,0){62}}
\put(1.5,45.875){\makebox(0,0)[cc]{$0$}}
\multiput(7,45)(.008849558,-.008849558){113}{\line(0,-1){.008849558}}
\multiput(42,20)(.008849558,-.008849558){113}{\line(0,-1){.008849558}}
\put(3,46){\vector(1,0){60}}
\put(4,28){\vector(0,1){23}}
\put(4,3){\vector(0,1){21}}
\put(27,47.5){\makebox(0,0)[cc]{$H(Z)$}}
\put(2.25,30){\makebox(0,0)[cc]{$I(Z)$}}
\put(3.125,52.25){\makebox(0,0)[cc]{$Z$}}
\multiput(26.982,45.982)(0,-.94118){18}{{\rule{.2pt}{.2pt}}}
\multiput(26.982,29.982)(-.95833,0){25}{{\rule{.2pt}{.2pt}}}
\multiput(61.982,45.982)(0,-.94118){18}{{\rule{.2pt}{.2pt}}}
\multiput(61.982,29.982)(-.972222,0){37}{{\rule{.2pt}{.2pt}}}
\multiput(38.982,4.982)(0,.94118){18}{{\rule{.2pt}{.2pt}}}
\multiput(61.982,4.982)(0,.94118){18}{{\rule{.2pt}{.2pt}}}
\multiput(61.982,20.982)(-.983051,0){60}{{\rule{.2pt}{.2pt}}}
\put(3,25){\makebox(0,0)[cc]{$Z'$}}
\put(1.875,5.25){\makebox(0,0)[cc]{$0$}}
\put(61,47.5){\makebox(0,0)[cc]{$V(Z)$}}
\put(62,2.875){\makebox(0,0)[cc]{$V(Z)$}}
\put(39,3){\makebox(0,0)[cc]{$V(Z)-H(Z)$}}
\end{picture}
\caption{Vervaat's transformation. Top panel: A path $Z$ with finite lifetime $V(Z)$ performing its infimum $I(Z)$ at time $H(Z)-$; Bottom panel: Vervaat's transform $Z'$ of $Z$ obtained by shifting $Z$ by $-I(Z)$ and performing a circular time-change starting at time $H(Z)$.}
\label{fig : vervaat}
\end{figure}

More specifically,
$$
Z'(s):=\left\{
\begin{array}{lcr}
Z(s+H(Z)) - I(Z)&\mbox{ if }&0\le s<V(Z)-H(Z)\\
Z(s+H(Z)-V(Z))-I(Z)&\mbox{ if }& V(Z)-H(Z)\le s <V(Z)\\
Z'(s) = 0&\mbox{ if }&  s =V(Z).
\end{array}
\right.
$$
Note that $Z'$ takes positive values, apart from its terminal value equal to 0 (and that $Z'$ is left-continuous at this point).\\
\\
Now let $Y_n$ denote the concatenation of the $n$ i.i.d.\ excursions $(\eps_i)_{i=1,\ldots,n}$ of the last subsection. In particular, $Y_n$ is equally distributed as the L{\'e}vy process $X$ reflected below its supremum and killed at its $(n+1)$-st
hitting time of $0$. Then observe that $I_n=I(Y_n)$ and let $Y_n'$ denote Vervaat's transformation of $Y_n$. We have the following corollary of Theorem \ref{thm1}.
\begin{thm}
\label{thm2}
For any $n\ge 1$,
$$
\PP\left(N_T=n, T\in dt, X^{(T)}\in d\varepsilon\right) = \frac{\delta}{b} \ e^{-\delta V(\varepsilon)}\
P\left( -I_n \in dt, Y_n'\in d\varepsilon\right).
$$
\end{thm}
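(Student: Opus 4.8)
The plan is to deduce Theorem \ref{thm2} from Theorem \ref{thm1} by applying one and the same deterministic concatenation operation to both sides of the latter and then recognising the two reassembled paths as $X^{(T)}$ and $Y_n'$. Theorem \ref{thm1} already matches the individual excursion pieces on the two sides; what remains is to glue them back together and to read off the single lifetime weight $e^{-\delta V(\varepsilon)}$.

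First I would make the reassembly on the tree side explicit. By the decomposition introduced just before Theorem \ref{thm1}, on $\{N_T=n\}$ the truncated contour $X^{(T)}$ is exactly the concatenation $w_0^t\oplus w_1^t\oplus\cdots\oplus w_{n-1}^t\oplus w_n^t$, the pieces being glued at the reflection level $t$ and each interior terminal value $w_i^t(\ell_i)>t$ encoding the pre-reflection jump (a death date). Denote this gluing by a map $\mathcal C_t$, so that $X^{(T)}=\mathcal C_t(w_0^t,\dots,w_n^t)$; once $t=T$ and $n=N_T$ are fixed, $\mathcal C_t$ is measurable and invertible, its inverse cutting the path at the successive visits of $t$.

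Next I would check that on the L\'evy side $Y_n'$ is produced by the very same gluing. Unfolding the definition $Y_n'(s)=Y_n(s+H(Y_n)\ \mathrm{mod}\ V(Y_n))-I_n$ and using that $Y_n$ attains its infimum $I_n=\jmath(\epsilon_{K_n})$ inside the excursion $\epsilon_{K_n}$ at time $h(\epsilon_{K_n})$, Vervaat's transform is seen to be the concatenation, in cyclic order, of the shifted pieces $\epsilon_{K_n}^\rightarrow+t,\ \epsilon_{K_n+1}+t,\ \dots,\ \epsilon_{K_n-1}+t,\ \epsilon_{K_n}^\leftarrow+t$ with $t=-I_n$. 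On $\{-I_n\in dt\}$ the shift by $+t$ raises every excursion infimum to a value $\ge 0$ and the global infimum exactly to $0$, so each $\epsilon_k+t$ stays below $t$ along its lifetime and only its terminal overshoot $\epsilon_k(V)+t$ exceeds $t$; this is precisely the reflected-below-$t$ bookkeeping of the $w_i^t$. Hence $Y_n'=\mathcal C_t(\epsilon_{K_n}^\rightarrow+t,\epsilon_{K_n+1}+t,\dots,\epsilon_{K_n}^\leftarrow+t)$, and the cyclic order here is exactly the contour order $0,1,\dots,n$ under the pairing dictated by Theorem \ref{thm1}, namely $w_0^t\leftrightarrow\epsilon_{K_n}^\rightarrow+t$, $w_i^t\leftrightarrow\epsilon_{K_n+i}+t$ and $w_n^t\leftrightarrow\epsilon_{K_n}^\leftarrow+t$.

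With these two identifications, Theorem \ref{thm1}---which holds for every choice of product test functions $G',G,F_1,\dots,F_{n-1}$, and such products form a measure-determining class on $\mathscr E^{\,n+1}$---shows that the $\PP$-law of the ordered $(n+1)$-tuple of pieces together with the level variable equals $\tfrac{\delta}{b}\prod_{k}e^{-\delta V(\epsilon_k)}$ times the corresponding $P$-law of the matched tuple. Pushing both tuple-laws forward through the common map $\mathcal C_t$ then yields the law of $X^{(T)}$ on the left and of $Y_n'$ on the right. Finally, since Vervaat's transform preserves lifetime, $\sum_k V(\epsilon_k)=V(Y_n)=V(Y_n')=V(\varepsilon)$ on $\{Y_n'\in d\varepsilon\}$, so $\prod_k e^{-\delta V(\epsilon_k)}$ collapses to the single factor $e^{-\delta V(\varepsilon)}$, which may be pulled out of the expectation, giving the stated identity. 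I expect the genuine work to lie in the third step: verifying that Vervaat's cut-and-shift reproduces the cyclically reordered pieces in exactly the contour order and that reflection at $t$ corresponds term-by-term to the shift by $-I_n$, while correctly handling the degenerate conventions (the one-point cases $V(\epsilon_{K_n})=h(\epsilon_{K_n})$ and $\ell_0>t$, left-continuity at the terminal $0$, and a.s.\ uniqueness of $K_n$ and of $H(Y_n)$).
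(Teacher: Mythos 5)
Your proposal is correct and follows essentially the same route as the paper: the paper also deduces Theorem \ref{thm2} from Theorem \ref{thm1} by rewriting the right-hand side on the event $\{-I_n\in dt\}$ (so that the shift $+t$ becomes $-I_n$ and $\prod_{k=1}^n e^{-\delta V(\epsilon_k)}$ collapses to $e^{-\delta V(Y_n')}$) and then invoking a monotone class theorem to pass from product functionals of the matched pieces to an arbitrary functional of the reassembled paths $X^{(T)}$ and $Y_n'$. The only difference is one of explicitness—you spell out the gluing map and the cyclic-order identification that the paper leaves implicit—and your side remark that this gluing map is invertible is inaccurate (the terminal overshoots are truncated at the reflection level, so they cannot be recovered) but harmless, since the argument only needs the same measurable map applied on both sides.
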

\begin{proof}
From Theorem \ref{thm1} we get
\begin{multline*}
\EE\left(G'(w_t^0) G(w_t^n)\prod_{i=1}^{n-1} F_i(w_t^i),N_T=n, T\in dt\right) =\\ \frac{\delta}{b} \ .\ 
E\left( G(\epsilon_{K_n}^\leftarrow-I_n)\,G'(\epsilon_{K_n}^\rightarrow-I_n)\,\prod_{k\not=K_n} F_{k-K_n \ mod(n)}(\epsilon_k-I_n)\,e^{-\delta V(Y_n')}, -I_n \in dt\right),
\end{multline*}
which, by a monotone class theorem \cite[p.2]{Kall}, ensures that for any non-negative measurable function $F$ on $\mathscr E$,
$$
\EE\left(F(X^{(t)}), N_T=n, T\in dt\right) = \frac{\delta}{b}\
E\left(F(Y_n')\,e^{-\delta V(Y_n')}, -I_n \in dt\right),
$$
hence the result.
\end{proof}

One can now push this path decomposition even further by starting from a path, say $Y$, of $X$ reflected below its supremum as well as from an independent exponential random variable $\mathbf{e}$ with parameter $\delta$. Note that $Y$ is the mere concatenation of a sequence $(\eps_i)_{i\ge 1}$ of i.i.d.\ excursions distributed as $\epsilon$ (stopped at the first one with infinite lifetime). Then let $M$ be the unique non-negative integer such that $\mathbf{e}$ falls into the $M+1$-st excursion of $Y$ away from 0
$$
M:= \max\{n\ge 0: V(\eps_1) + \cdots +V(\eps_n)<\mathbf{e}\},
$$
with the usual convention that an empty sum is 0.
As previously, define $Y_M$ as the path $Y$ killed at its $M+1$-st hitting time of 0, set $I_M:= \inf Y_M$ and let $Y_M'$ denote Vervaat's transform of $Y_M$.

\begin{thm}
\label{thm3}
For any $n\ge 1$,$$
\PP\left(N_T=n, T\in dt,X^{(t)}\in d\vareps\right) = \frac{\delta}{b\ E_0\big(1-e^{-\delta \tau_0^+}\big)} \  
P\left(M=n, -I_M \in dt, Y_M'\in d\vareps\right).
$$
\end{thm}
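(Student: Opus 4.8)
The plan is to deduce Theorem \ref{thm3} directly from Theorem \ref{thm2} by re-expressing the right-hand side of the latter, which carries the exponential weight $e^{-\delta V(\vareps)}$, as an unweighted quantity attached to the ``global'' picture in which a single clock $\mathbf{e}$ runs along the whole reflected path $Y$. The only thing that really needs to be established is the identity
\[
P\left(M=n, -I_M \in dt, Y_M'\in d\vareps\right) = E_0\big(1-e^{-\delta \tau_0^+}\big)\, e^{-\delta V(\vareps)}\, P\left(-I_n \in dt, Y_n'\in d\vareps\right),
\]
after which substitution into Theorem \ref{thm2} and collecting the constant $\frac{\delta}{b}\cdot\frac{1}{E_0(1-e^{-\delta\tau_0^+})}$ gives the claimed formula at once.

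To prove this identity, I would first observe that on the event $\{M=n\}$ the killed path $Y_M$ coincides with $Y_n$, the concatenation of the first $n$ excursions $\epsilon_1,\dots,\epsilon_n$, so that $I_M=I_n$ and $Y_M'=Y_n'$ there. Writing $S_n:=V(\epsilon_1)+\cdots+V(\epsilon_n)=V(Y_n)$ and using that $\mathbf{e}$ is exponential with parameter $\delta$ and independent of the excursion sequence, memorylessness gives
\[
P\big(M=n \mid (\epsilon_i)_{i\ge 1}\big) = e^{-\delta S_n}-e^{-\delta S_{n+1}} = e^{-\delta S_n}\big(1-e^{-\delta V(\epsilon_{n+1})}\big),
\]
a formula that remains valid when $\epsilon_{n+1}$ has infinite lifetime, both sides then reducing to $e^{-\delta S_n}$.

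Next I would condition on the excursions and integrate out the clock. Since $I_M,Y_M'$ (equivalently $I_n,Y_n'$) depend only on $\epsilon_1,\dots,\epsilon_n$ while the factor $1-e^{-\delta V(\epsilon_{n+1})}$ depends only on $\epsilon_{n+1}$, independence lets the expectation factor as
\[
P\left(M=n, -I_M \in dt, Y_M'\in d\vareps\right) = E\Big(e^{-\delta S_n}\,\mathbf{1}_{\{-I_n\in dt,\,Y_n'\in d\vareps\}}\Big)\, E\big(1-e^{-\delta V(\epsilon)}\big).
\]
Two identifications then finish the computation: $E(1-e^{-\delta V(\epsilon)})=E_0(1-e^{-\delta \tau_0^+})$, since $\epsilon$ is $X$ started at $0$ and killed at $\tau_0^+$; and, because Vervaat's transform preserves the lifetime, $S_n=V(Y_n)=V(Y_n')$, which equals $V(\vareps)$ on the event $\{Y_n'\in d\vareps\}$, so the weight $e^{-\delta S_n}$ may be pulled out as $e^{-\delta V(\vareps)}$. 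This yields the asserted identity and hence the theorem.

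The computations above are essentially routine; the one point requiring care—and the only real obstacle—is the bookkeeping around the event $\{M=n\}$. It is tempting to treat $\{M=n\}$ as measurable with respect to $\epsilon_1,\dots,\epsilon_n$ alone, but it genuinely involves the $(n+1)$-st excursion through the requirement $\mathbf{e}\le S_{n+1}$. The correct split is to keep $Y_M'$ and $I_M$ tied to the first $n$ excursions while letting the clock condition produce the extra factor $1-e^{-\delta V(\epsilon_{n+1})}$; verifying that this factor is exactly what produces the normalising constant $E_0(1-e^{-\delta\tau_0^+})$, uniformly over the possibility that $\epsilon_{n+1}$ be infinite, is where the argument must be stated precisely.
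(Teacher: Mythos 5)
Your proof is correct and follows essentially the same route as the paper's: both reduce the event $\{M=n\}$ to the clock $\mathbf{e}$ falling in the interval between $V(Y_n')$ and $V(Y_n')+V(\epsilon_{n+1})$, factor out $e^{-\delta V(\vareps)}$ and $E\big(1-e^{-\delta V(\epsilon_{n+1})}\big)=E_0\big(1-e^{-\delta\tau_0^+}\big)$ by independence, and then appeal to Theorem \ref{thm2}. The only difference is presentational: you make the conditioning on the excursion sequence and the infinite-lifetime case explicit, whereas the paper performs the same computation in one displayed chain of equalities.
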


\begin{proof}
Using the definition of $M$, we get
\begin{eqnarray*}
P\left(M=n, -I_M \in dt, Y_M'\in d\vareps\right)	
				&=&   P\left(V(Y_n')<\mathbf{e}<V(Y_n')+V(\epsilon_{n+1}), -I_n \in dt, Y_n'\in d\vareps\right)\\
				&=& e^{-\delta V(\varepsilon)}\ E\big(1-e^{-\delta V(\epsilon_{n+1})}\big)\ P\left( -I_n \in dt, Y_n'\in d\vareps\right)\\
				&=& e^{-\delta V(\varepsilon)}\ E_0\big(1-e^{-\delta \tau_0^+}\big)\ P\left( -I_n \in dt, Y_n'\in d\vareps\right),
\end{eqnarray*}
and an appeal to Theorem \ref{thm2} yields the result.
\end{proof}

\section{Applications and explicit formulae}

\subsection{Some lower dimensional marginals of interest}
\label{subsec:lowerdm}
In this subsection, we give the joint law of $T$ and $N_T$, as well as the joint law of the ages and residual lifetimes $(A_1, R_1, \ldots, A_{N_T}, R_{N_T})$ of the $N_T$ alive individuals at time $T$ on the event $\{T<\infty\}$. 

The next statement follows from Theorem \ref{thm1} by taking all functionals equal to 1. Note that $E_0\left(e^{-\delta \tau_0^+}, \tau_0^+<\tau_{-t}\right)$ can be read as $E_0\left(e^{-\delta \tau_0^+}, -\inf_{0\le s \le \tau_0^+} X_s <t\right)$, so it is differentiable with derivative equal to $E_0\left(e^{-\delta \tau_0^+}, -\inf_{0\le s \le \tau_0^+} X_s \in dt\right)/dt$.
\begin{cor} Let $n\ge 1$ and $y,t>0$.
The joint law of $T$ and $N_T$ is given by
\begin{multline*}
\PP(N_T=n, T\in dt)=\frac{\delta}{b} \ .\
E\left( \prod_{k=1}^n e^{-\delta V(\epsilon_k)}, -I_n \in dt\right) \\= 
\frac{n\delta}{b} \ .\
E_0\left( e^{-\delta \tau_0^+}, -\inf_{0\le s \le \tau_0^+} X_s \in dt \right)\,
\left(E_0\left(e^{-\delta \tau_0^+}, \tau_0^+<\tau_{-t}\right)\right)^{n-1}
.
\end{multline*}
Integrating the variable $t$ over $(0,y)$ yields
$$
\PP(N_T=n, T<y)=\frac{\delta}{b} \ .\
\left(E_0\left(e^{-\delta \tau_0^+}, \tau_0^+<\tau_{-y}\right)\right)^n,
$$
 and finally, letting $y\to\infty$, we get
$$
\PP(N_T=n)=\frac{\delta}{b} \ .\
\left(E_0\left(e^{-\delta \tau_0^+}\right)\right)^n .
$$
\end{cor}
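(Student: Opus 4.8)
The plan is to obtain the four displayed formulas in succession, taking Theorem~\ref{thm1} as the starting point and reducing everything else to elementary manipulations. For the first identity I would specialize Theorem~\ref{thm1} to the constant functionals $G\equiv G'\equiv F_1\equiv\cdots\equiv F_{n-1}\equiv 1$. Every shift-by-$t$ functional on the right-hand side then evaluates to $1$, the product $\prod_{k\ne K_n}F_{k-K_n\ mod(n)}(\epsilon_k+t)$ collapses, and one is left precisely with $\PP(N_T=n,T\in dt)=\frac{\delta}{b}\,E\big(\prod_{k=1}^n e^{-\delta V(\epsilon_k)},-I_n\in dt\big)$.

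The substantive step is the second equality, where this excursion expectation must be evaluated explicitly. I would decompose the joint law of $\big(-I_n,\prod_k e^{-\delta V(\epsilon_k)}\big)$ according to which excursion realizes the global infimum, i.e.\ over the value of $K_n$. Since the $\epsilon_k$ are i.i.d.\ and $\jmath$ has a non-atomic law (the infimum is attained just before a jump of a compound Poisson process with nonzero drift, so ties among the $\jmath(\epsilon_k)$ occur with probability $0$), $K_n$ is a.s.\ well defined and
$$E\Big(\prod_{k=1}^n e^{-\delta V(\epsilon_k)},\,-I_n\in dt\Big)=\sum_{j=1}^n E\big(e^{-\delta V(\epsilon_j)},\,-\jmath(\epsilon_j)\in dt\big)\prod_{k\ne j}E\big(e^{-\delta V(\epsilon_k)},\,-\jmath(\epsilon_k)<t\big).$$
By exchangeability all $n$ summands are equal, which produces the factor $n$. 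The passage to hitting-time quantities is then read off the definition of $\epsilon$: as $\epsilon$ is $X$ started at $0$ and killed at $\tau_0^+$, one has $V(\epsilon)=\tau_0^+$ and $\jmath(\epsilon)=\inf_{0\le s\le\tau_0^+}X_s$, so $\{-\jmath(\epsilon)\in dt\}=\{-\inf_{0\le s\le\tau_0^+}X_s\in dt\}$, while $\{-\jmath(\epsilon)<t\}=\{\tau_0^+<\tau_{-t}\}$. The latter uses that $X$ has no negative jumps, so its first passage below $-t$ coincides with $\tau_{-t}$; the running infimum stays above $-t$ exactly when $(0,+\infty]$ is reached before $-t$. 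This delivers the second displayed line.

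For the last two formulas I would set $\phi(t):=E_0\big(e^{-\delta\tau_0^+},\tau_0^+<\tau_{-t}\big)$. By the remark preceding the corollary, $\phi$ is differentiable with $\phi'(t)=E_0\big(e^{-\delta\tau_0^+},-\inf_{0\le s\le\tau_0^+}X_s\in dt\big)/dt$, so the density just obtained is $\frac{\delta}{b}\,n\,\phi'(t)\phi(t)^{n-1}=\frac{\delta}{b}\,\frac{d}{dt}\big(\phi(t)^n\big)$. Integrating over $(0,y)$ and using $\phi(0+)=0$ (for $t\downarrow0$ the path started at $0$ with slope $-1$ reaches $-t$ essentially instantly, so $\{\tau_0^+<\tau_{-t}\}$ becomes null) yields $\PP(N_T=n,T<y)=\frac{\delta}{b}\,\phi(y)^n$. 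Finally, letting $y\to\infty$ and applying monotone convergence, the constraint $\tau_0^+<\tau_{-y}$ becomes vacuous on $\{\tau_0^+<\infty\}$ while $e^{-\delta\tau_0^+}=0$ on $\{\tau_0^+=\infty\}$, so $\phi(y)\uparrow E_0\big(e^{-\delta\tau_0^+}\big)$ and the last formula follows.

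The only genuinely delicate point is the second step: decomposing the joint law over the argmin $K_n$, justifying the absence of ties so that the combinatorial factor $n$ is exact, and matching the excursion-level events $\{-\jmath\in dt\}$ and $\{-\jmath<t\}$ to the hitting-time events of $X$ (the latter relying on the spectral positivity of $X$). Once these identifications are in place, recognizing the density as the total derivative $\frac{d}{dt}\phi^n$ and carrying out the integration and the two limit arguments are routine.
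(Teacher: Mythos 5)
Your proof is correct and follows essentially the same route as the paper: all functionals are set equal to $1$ in Theorem \ref{thm1}, the second equality is obtained by decomposing over the argmin $K_n$ and using exchangeability to produce the factor $n$ — which is precisely the content of the paper's second lemma (there summed over $K_n=j$) — and the integration and limit steps rest on the differentiability remark the paper places just before the corollary. The only difference is that you re-derive the argmin decomposition from independence and the a.s.\ absence of ties among the $\jmath(\epsilon_k)$ instead of citing that lemma, which is a harmless and in fact clarifying expansion of the paper's terse argument.
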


The next statement follows from Theorem \ref{thm1} by reducing the functionals to functions of the bi-variate random variable $(-\eps(V(\eps)-), \eps(V(\eps)))$, known as the undershoot and overshoot of $\epsilon$ at its first up-crossing of the $x$-axis. Indeed, recall that the age $A_i$ and the residual lifetime $R_i$ of the $i$-th individual in the population at time $t$ in the order of the contour,  are seen directly on the JCCP as the undershoot and overshoot of $w_i^t$ across $t$ (which occurs at time $V(w_i^t)$, with terminal value equal to the date of death of this $i$-th individual). We use the notation $Und(\eps):=-\eps(V(\eps)-)$ and $Ove(\eps)=\eps(V(\eps))$ for the undershoot and overshoot of $\eps$.

\begin{cor}
The joint law of the ages and residual lifetimes $(A_1, R_1, \ldots, A_{N_T}, R_{N_T})$ of the $N_T$ alive individuals at time $T$ is given by
\begin{multline*}
\PP(N_T=n, T\in dt, A_i\in da_i,R_i\in dr_i,i=1,\ldots,n)
=\\
\frac{\delta}{b} \ .\ E\left(\prod_{k=1}^n e^{-\delta V(\epsilon_k)} , -I_n \in dt,  Und(\eps_{i-1+K_n\  mod(n)})\in da_i, Ove(\eps_{i-1+K_n\  mod(n)}) \in dr_i, i=1,\ldots, n \right) \\= 
\frac{n\delta}{b} \ .\
E_0\left(e^{-\delta \tau_0^+}, -\inf_{0\le s \le \tau_0^+} X_s \in dt, -X_{\tau_0^+ -}\in da_1, X_{\tau_0^+ }\in dr_1   \right)\times
\\
\times\prod_{k=2}^{n}
E_0\left(e^{-\delta \tau_0^+}, -X_{\tau_0^+ -}\in da_k, X_{\tau_0^+ }\in dr_k,\tau_0^+<\tau_{-t} \right)
.
\end{multline*}
\end{cor}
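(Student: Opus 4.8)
The plan is to read both equalities off Theorem \ref{thm1}: the first by specializing the test functionals to undershoot/overshoot functions, and the second by disintegrating over the index of the minimizing excursion and symmetrizing.

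For the first equality, recall that $A_i$ and $R_i$ are precisely the undershoot $Und$ and overshoot $Ove$ of the crossing excursion attached to the $i$-th alive individual. In Theorem \ref{thm1} I would take $G\equiv 1$ — the excursion $w_t^n$, which on the right-hand side is $\eps_{K_n}^\leftarrow+t$, terminates at $0$ (its shifted infimum) and carries no alive individual — and let $G'$ and $F_1,\dots,F_{n-1}$ depend on their argument $w$ only through the pair $(Und(w),Ove(w))$. Two elementary invariances make the substitution exact: a vertical shift leaves the undershoot and overshoot across the crossing level unchanged, so reading them across $t$ for $\eps_k+t$ is the same as reading them across $0$ for $\eps_k$; and the post-infimum piece $\eps_{K_n}^\rightarrow$ inherits the terminal up-crossing of $\eps_{K_n}$, whence $Und(\eps_{K_n}^\rightarrow)=Und(\eps_{K_n})$ and $Ove(\eps_{K_n}^\rightarrow)=Ove(\eps_{K_n})$. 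With these identifications the right-hand side of Theorem \ref{thm1} collapses to the joint law of $\big(Und(\eps_{i-1+K_n\ \mathrm{mod}(n)}),Ove(\eps_{i-1+K_n\ \mathrm{mod}(n)})\big)_{i=1}^{n}$ weighted by $\prod_k e^{-\delta V(\eps_k)}$ on $\{-I_n\in dt\}$, which is the first displayed line.

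For the second equality, I would disintegrate the $n$-fold expectation according to the a.s.\ unique minimizing index $K_n=j$. On $\{-I_n\in dt,\,K_n=j\}$ the excursion $\eps_j$ attains its infimum at level $-t$, while every other $\eps_k$ stays above $-t$, i.e.\ up-crosses $0$ before reaching $-t$ (the event $\tau_0^+<\tau_{-t}$). Independence of the excursions factorizes the expectation, and the identification $\eps\sim(X_s;s\le\tau_0^+)$ under $P_0$ — with $V(\eps)=\tau_0^+$, $\jmath(\eps)=\inf_{s\le\tau_0^+}X_s$, $Und(\eps)=-X_{\tau_0^+-}$ and $Ove(\eps)=X_{\tau_0^+}$ — turns the minimizing factor into $E_0\big(e^{-\delta\tau_0^+},-\inf_{s\le\tau_0^+}X_s\in dt,-X_{\tau_0^+-}\in da_1,X_{\tau_0^+}\in dr_1\big)$ and each of the $n-1$ remaining factors into $E_0\big(e^{-\delta\tau_0^+},-X_{\tau_0^+-}\in da_k,X_{\tau_0^+}\in dr_k,\tau_0^+<\tau_{-t}\big)$. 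Because individual $1$ is always carried by the minimizing excursion (since $w_t^0=\eps_{K_n}^\rightarrow+t$) and the non-minimizing factors are identically distributed, the product over them does not depend on $j$; summing over $j=1,\dots,n$ thus produces the factor $n$, upgrading the prefactor $\delta/b$ to $n\delta/b$ and yielding the second displayed line.

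The main obstacle is purely combinatorial bookkeeping: one must track the cyclic relabeling $i-1+K_n\ \mathrm{mod}(n)$ through the disintegration so that exactly one factor (that of individual $1$) inherits the constraint $-\inf X\in dt$ while the other $n-1$ inherit $\tau_0^+<\tau_{-t}$, and one must verify the shift- and splitting-invariance of $(Und,Ove)$ carefully enough that no path information needed for these marginals is lost when the full functionals of Theorem \ref{thm1} are reduced to functions of the undershoot and overshoot alone.
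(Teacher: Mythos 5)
Your proposal is correct and takes essentially the same route as the paper: the paper obtains this corollary from Theorem \ref{thm1} precisely by reducing the functionals to functions of the undershoot and overshoot (with the functional of $w_t^n$ taken trivial, since that excursion carries no alive individual), and the second equality comes, exactly as you argue, from disintegrating on $\{K_n=j\}$, factorizing by independence of the i.i.d.\ excursions, and summing the $n$ identical terms to produce the factor $n$. The shift- and splitting-invariance of $(Und,Ove)$ and the cyclic index bookkeeping that you check explicitly are left implicit in the paper, so your write-up is simply a more detailed version of the same argument.
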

Recall from Theorem \ref{thm3} (or observe from the last statement) that $A_1$ and $R_1$ are the undershoot and overshoot of the excursion where the infimum is performed. In order to lose this information (which is certainly not in the hands of who observes the beginning of the epidemics), we reshuffle the labels of the individuals at $T$, on the event $\{T<\infty, N_T=n\}$, by drawing independently a uniform permutation $\varsigma$ on $\{1,\ldots,n\}$ and setting
$$
(A_i', R_i'):=(A_{\varsigma(i)}, R_{\varsigma(i)})\qquad i=1,\ldots,n.
$$
The first equality in the next statement is a mere reformulation of Corollary 4.2 using the previous definition. The integration part comes from the same argument as the one mentioned before Corollary 4.1, i.e., by writing the event $\{\tau_0^+<\tau_{-t}\}$ in the form 
$\{-\inf_{0\le s \le \tau_0^+} X_s <t\}$.
\begin{cor}

The joint law of the (reshuffled) ages and residual lifetimes $(A_1', R_1', \ldots, A_{N_T}', R_{N_T}')$ of the $N_T$ alive individuals at time $T$ is given by
\begin{multline*}
\PP(N_T=n, T\in dt, A_i'\in da_i,R_i'\in dr_i,i=1,\ldots,n)
= \\
\frac{\delta}{b} \ .\
\sum_{i=1}^n
E_0\left(e^{-\delta \tau_0^+}, -\inf_{0\le s \le \tau_0^+} X_s \in dt, -X_{\tau_0^+ -}\in da_i, X_{\tau_0^+ }\in dr_i  \right)\times
\\
\times\prod_{ k\not=i}
E_0\left(e^{-\delta \tau_0^+}, -X_{\tau_0^+ -}\in da_k, X_{\tau_0^+ }\in dr_k,\tau_0^+<\tau_{-t}\right)
.
\end{multline*}
Integrating the variable $t$ over $(0,y)$ yields
\begin{multline*}
\PP(N_T=n, T<y, A_i'\in da_i,R_i'\in dr_i,i=1,\ldots,n)=\\
\frac{\delta}{b} \ .\
\prod_{k=1}^{n}
E_0\left(e^{-\delta \tau_0^+}, -X_{\tau_0^+ -}\in da_k, X_{\tau_0^+ }\in dr_k,\tau_0^+<\tau_{-y} \right)
,
\end{multline*}
 and finally, letting $y\to\infty$, we get
$$
\PP(N_T=n, A_i'\in da_i,R_i'\in dr_i,i=1,\ldots,n)=\frac{\delta}{b} \ .\
\prod_{k=1}^{n}
E_0\left(e^{-\delta \tau_0^+}, -X_{\tau_0^+ -}\in da_k, X_{\tau_0^+ }\in dr_k \right)
.
$$\end{cor}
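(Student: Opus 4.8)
The plan is to derive all three identities from Corollary 4.2, first performing the reshuffling to obtain the opening equality and then integrating in $t$ to obtain the other two.

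For the first equality I would start from the ordered law of Corollary 4.2 and apply the definition $(A_i',R_i'):=(A_{\varsigma(i)},R_{\varsigma(i)})$ with $\varsigma$ uniform on $\{1,\ldots,n\}$ and independent of everything else. In the ordered law exactly one excursion plays a distinguished role, namely the one carrying the global infimum (indexed so that it sits in position $1$ via the cyclic shift $K_n$), while the remaining $n-1$ excursions are governed by the generic factor $E_0(e^{-\delta\tau_0^+},-X_{\tau_0^+-}\in da_k,X_{\tau_0^+}\in dr_k,\tau_0^+<\tau_{-t})$. Averaging over $\varsigma$ sends this distinguished excursion to each observed position $i\in\{1,\ldots,n\}$ with weight $1/n$; since the $n-1$ generic excursions are exchangeable, the precise images of the other positions are immaterial. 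Hence symmetrization produces exactly the sum $\sum_{i=1}^n$ over the possible locations of the infimum excursion, and the combinatorial factor $1/n$ cancels the prefactor $n$ of Corollary 4.2, leaving the announced $\delta/b$.

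For the integration over $t\in(0,y)$ I would use the observation recalled just before Corollary 4.1: writing $\{\tau_0^+<\tau_{-t}\}$ as $\{-\inf_{0\le s\le\tau_0^+}X_s<t\}$, the generic factor $g_t(a,r):=E_0(e^{-\delta\tau_0^+},-X_{\tau_0^+-}\in da,X_{\tau_0^+}\in dr,\tau_0^+<\tau_{-t})$ is differentiable in $t$ with $\partial_t g_t(a,r)=E_0(e^{-\delta\tau_0^+},-X_{\tau_0^+-}\in da,X_{\tau_0^+}\in dr,-\inf_{0\le s\le\tau_0^+}X_s\in dt)/dt$. The key point is that the $i$-th summand of the first equality is precisely $\partial_t g_t(a_i,r_i)\prod_{k\ne i}g_t(a_k,r_k)$, so by the product rule the full sum over $i$ equals the total derivative $\frac{d}{dt}\prod_{k=1}^n g_t(a_k,r_k)$. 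Integrating in $t$ then telescopes to the boundary values: at $t=y$ I recover the claimed product $\prod_k E_0(e^{-\delta\tau_0^+},\ldots,\tau_0^+<\tau_{-y})$, while at $t=0$ the factor vanishes, since an excursion $\eps$ of $X$ started at $0$ and killed at $\tau_0^+$ dips strictly below $0$ before up-crossing, so $-\inf_{0\le s\le\tau_0^+}X_s>0$ almost surely and $g_0\equiv 0$. Finally, letting $y\to\infty$, the events $\{\tau_0^+<\tau_{-y}\}$ increase to $\{\tau_0^+<\infty\}$; since $e^{-\delta\tau_0^+}$ already annihilates the contribution of $\{\tau_0^+=\infty\}$, monotone convergence removes the constraint and yields the third equality.

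The main obstacle I anticipate is the bookkeeping in the reshuffling step: one must verify that a uniform permutation turns the single cyclically-distinguished infimum excursion of Corollary 4.2 into an exact $1/n$-weighted sum over observed positions, so that the prefactor $n$ cancels cleanly, and then recognize that the resulting sum is a perfect total $t$-derivative of a single product. Once this product-rule structure is identified the integration and the limit are routine; the only additional care needed is the vanishing of the boundary term at $t=0$, which rests on the almost-sure strict negativity of the infimum of $\eps$.
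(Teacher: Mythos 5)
Your proposal is correct and follows the paper's own route: the paper likewise obtains the first display as a reformulation of Corollary 4.2 under the uniform reshuffling (the $1/n$ from averaging over $\varsigma$ cancelling the prefactor $n$), and performs the $t$-integration by rewriting $\{\tau_0^+<\tau_{-t}\}$ as $\{-\inf_{0\le s\le\tau_0^+}X_s<t\}$ so that the sum over $i$ is recognized as the total derivative $\frac{d}{dt}\prod_{k=1}^n g_t(a_k,r_k)$, with the boundary term at $t=0$ vanishing and monotone convergence giving the $y\to\infty$ limit. Your write-up simply makes explicit the bookkeeping that the paper leaves implicit.
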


\begin{rem}
We observe that conditional on $N_T=n$ and/or conditional on $N_T=n$ and $T<y$, the random pairs $(A_i,R_i)$, $1 \leq i \leq n$, are i.i.d.\ and their common distribution does not depend on $n$.
\end{rem}

\subsection{Completely asymmetric L{\'e}vy processes}

Here, we seek to provide the reader with more explicit formulae regarding the quantities considered in Subsection \ref{subsec:lowerdm}, taking advantage of background knowledge on L{\'e}vy processes.
Except the proposition stated at the end of the present subsection, all results stated here and the references to their original contributors, can be found in \cite{B, B2}.
 
Instead of the jump measure $\pi$ of the L{\'e}vy process $X$ with no negative jumps, it can be convenient to handle its Laplace exponent $\psi$ defined as
\begin{equation}
\label{eqn : psi}
\psi(a):= a -\int_{(0,+\infty]} \pi(dx) (1-e^{-a x}) \qquad a\ge 0.
\end{equation}
Recall that the real number $\pi(\{\infty\})$ can be positive, since particles may have infinite lifetimes. It is also the killing rate of $X$.
The function  $\psi$ is differentiable and convex and we denote by $\eta$ its largest root. Then $\psi$  is increasing on $[\eta,+\infty)$ and we denote by $\phi$ its inverse mapping on this set. 
Furthermore, the so-called two-sided exit problem (exit of an interval from the bottom or from the top by $X$) has a simple solution, in the form
\begin{equation}
\label{eqn : two-sided}
P_s(\tau_0< \tau^+_t) = \frac{W(t-s)}{W(t)},
\end{equation}
where the so-called scale function $W$ is the non-negative, nondecreasing, differentiable function such that $W(0)=1$, characterised by its Laplace transform 
\begin{equation}
\label{eqn : LT scale}
\intgen dx\, e^{-a x} \, W(x) = \frac{1}{\psi(a)} \qquad a>\eta.
\end{equation}
Equation \eqref{eqn : two-sided}  gives the probability that $X$ exits the interval $(0, t]$ from the bottom. The following formula gives the Laplace transform of the first exit time $\rho_t:=\tau_0\wedge \tau_t^+$ on this event. For any $q > 0$,
\begin{equation}
\label{eqn : two-sided-LT}
E_s\left(e^{-q \rho_t},\tau_0< \tau^+_t\right) = \frac{W^{(q)}(t-s)}{W^{(q)}(t)},
\end{equation}
where the so-called $q$-scale function $W^{(q)}$ is the non-negative, nondecreasing, differentiable function such that $W^{(q)}(0) = 1$, characterised by its Laplace transform
\begin{equation}
\label{eqn : LT q-scale}
\intgen dx\, e^{-a x} \, W^{(q)}(x) = \frac{1}{\psi(a)-q} \qquad a>\phi(q).
\end{equation}
The $q$-resolvent of the process killed upon exiting $(0, t]$ is given by the following formula ($s,y\in (0,t]$)
\begin{equation}
\label{eqn : resolvent}
u_t^q(s,y):=E_s\left( \int_0^{\rho_t} e^{-qv} \indic{X_v \in dy}dv\right) /dy
=\frac{W^{(q)}(t-s)W^{(q)}(y)}{W^{(q)}(t)}-\indic{y> s}W^{(q)}(y-s).
\end{equation}
We also need the $q$-resolvent of the process killed upon exiting $(-\infty, 0]$ ($s,y\ge 0$)
\begin{equation}
\label{eqn : resolvent2}
u^q(s,y):=E_{-s}\left( \int_0^{\tau_0^+} e^{-qv} \indic{-X_v \in dy}dv\right) /dy
=e^{-\phi(q)y}\,W^{(q)}(s)-\indic{s> y}W^{(q)}(s-y).
\end{equation}
Last, we have the following expression for the bi-variate law of the undershoot and overshoot on the event that the process exits $(0,t]$ from the top
\begin{equation}
\label{eqn : shoots}
E_s\left( e^{-q\rho_t}, \tau_t^+<\tau_0, X_{\rho_t-}\in dy,  X_{\rho_t}-X_{\rho_t-}\in dz\right) 
= u_t^q(s,y)\, dy\, \pi(dz) \qquad z+y>t, y\in (0,t),
\end{equation}
and the analogue for the exit from $(-\infty,0]$.\\
\\
The next statement deals with the following quantities of interest in relation to  Subsection \ref{subsec:lowerdm}. For any $t>0$ and $q\ge 0$, set
$$
G_q(t):= 1- E_0\left(e^{-q \tau_0^+}, \tau_0^+<\tau_{-t}\right).
$$
In particular, as $t\to\infty$, $G_q(t)$ converges to $G_q(\infty):= E_0\left(1-e^{-q \tau_0^+}\right)$.

\begin{prop} 
\label{prop:Gq}
For any $q,r \ge 0$ and $0<a<t$,
$$
E_0\left( e^{-q \tau_0^+}, -X_{\tau_0^+ -}\in da, X_{\tau_0^+ }\in dr, \tau_0^+ <\tau_{-t} \right)=
\frac{W^{(q)}(t-a)}{W^{(q)}(t)}\, da\, d\pi(a+r),
$$
and
$$
E_0\left( e^{-q \tau_0^+}, -X_{\tau_0^+ -}\in da, X_{\tau_0^+ }\in dr\right)=
e^{-\phi(q)a}\, da\, d\pi(a+r).
$$
For any $q,t\ge 0$, 
$$
G_q(t) = \frac{1+q\int_0^t W^{(q)}(s)\, ds}{W^{(q)}(t) } 
$$
and
$$
\phi(q) = q + b E \left(1-e^{-q \tau_0}\right) = \frac{q}{G_q(\infty)}.
$$
\end{prop}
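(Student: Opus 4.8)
The plan is to establish the four displayed formulae in turn, deriving each from the completely-asymmetric fluctuation identities \eqref{eqn : two-sided-LT}--\eqref{eqn : shoots} together with spatial homogeneity, and reserving the scale-function asymptotics for the very last identity. Throughout I would write $Z^{(q)}(t):=1+q\int_0^t W^{(q)}(s)\,ds$, so that the third formula is the claim $G_q(t)=Z^{(q)}(t)/W^{(q)}(t)$.

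First I would obtain the undershoot--overshoot law. By translation invariance, the excursion of $X$ started at $0$ and killed at $\tau_0^+$ becomes, after adding $t$ to every value, the path of $X$ started at $t$ killed on leaving $(0,t]$: the event $\{\tau_0^+<\tau_{-t}\}$ becomes $\{\tau_t^+<\tau_0\}$, the undershoot $-X_{\tau_0^+-}=a$ becomes $X_{\tau_t^+-}=t-a=:y\in(0,t)$, and the overshoot $X_{\tau_0^+}=r$ corresponds to a jump of size $z=a+r$ landing above $t$. Feeding $s=t$, $y=t-a$, $z=a+r$ into \eqref{eqn : shoots} gives $u_t^q(t,t-a)\,da\,d\pi(a+r)$, and \eqref{eqn : resolvent} yields $u_t^q(t,t-a)=W^{(q)}(0)W^{(q)}(t-a)/W^{(q)}(t)=W^{(q)}(t-a)/W^{(q)}(t)$, since $W^{(q)}(0)=1$ and $\indic{t-a>t}$ vanishes; this is the first display. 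The second display I would read off the $(-\infty,0]$-analogue of \eqref{eqn : shoots}, whose density is the resolvent \eqref{eqn : resolvent2} evaluated at starting point $0$, where $u^q(0,a)=e^{-\phi(q)a}W^{(q)}(0)=e^{-\phi(q)a}$. Consistently, the same display is the $t\to\infty$ limit of the first one, using $W^{(q)}(t-a)/W^{(q)}(t)\to e^{-\phi(q)a}$.

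For $G_q(t)$ I would compute $1-G_q(t)=E_0\!\left(e^{-q\tau_0^+},\tau_0^+<\tau_{-t}\right)$, which by the same shift equals $E_t\!\left(e^{-q\rho_t},\tau_t^+<\tau_0\right)$ for the process killed on leaving $(0,t]$. To get this ``exit from the top'' companion of \eqref{eqn : two-sided-LT}, I would write $E_s(e^{-q\rho_t})=1-q\int_0^t u_t^q(s,y)\,dy$, evaluate the integral from \eqref{eqn : resolvent} using $\int_0^x W^{(q)}(s)\,ds=(Z^{(q)}(x)-1)/q$, and subtract the bottom-exit term \eqref{eqn : two-sided-LT}; this gives $E_s(e^{-q\rho_t},\tau_t^+<\tau_0)=Z^{(q)}(t-s)-Z^{(q)}(t)W^{(q)}(t-s)/W^{(q)}(t)$. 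Setting $s=t$ (the scale functions are continuous, so the boundary value is unambiguous) collapses this to $1-Z^{(q)}(t)/W^{(q)}(t)$, whence $G_q(t)=Z^{(q)}(t)/W^{(q)}(t)$. One could instead integrate the first display over $(a,r)$, but that route needs the convolution identity $\int_0^t W^{(q)}(t-a)\,\pi((a,+\infty])\,da=W^{(q)}(t)-Z^{(q)}(t)$, which I find less transparent.

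Finally, for the two expressions of $\phi(q)$ (with $q>0$), letting $t\to\infty$ in $G_q(t)=Z^{(q)}(t)/W^{(q)}(t)$ and using $W^{(q)}(t)\sim c_q e^{\phi(q)t}$, hence $\int_0^t W^{(q)}(s)\,ds\sim W^{(q)}(t)/\phi(q)$, gives $G_q(\infty)=q/\phi(q)$, i.e.\ $\phi(q)=q/G_q(\infty)$. For the other equality I would invoke the spectrally-one-sided first-passage formula $E_s(e^{-q\tau_0})=e^{-\phi(q)s}$ (downward creeping of $X$), so that with $X_0\sim\mu$ one has $E(1-e^{-q\tau_0})=\int\mu(ds)\,(1-e^{-\phi(q)s})$; since $\pi=b\mu$, this equals $b^{-1}\int\pi(dx)\,(1-e^{-\phi(q)x})$. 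Reading \eqref{eqn : psi} at the point $\phi(q)$ and using $\psi(\phi(q))=q$ then gives $\phi(q)=q+\int\pi(dx)\,(1-e^{-\phi(q)x})=q+bE(1-e^{-q\tau_0})$. I expect the main obstacle to be the bookkeeping of the spatial shift and the boundary evaluation at $s=t$, together with the scale-function asymptotics $W^{(q)}(t)\sim c_q e^{\phi(q)t}$ underlying $G_q(\infty)=q/\phi(q)$; once those are in hand, the remaining steps are substitutions into the resolvent and scale-function formulae.
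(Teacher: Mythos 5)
Your proposal is correct, and for the first two displays it takes essentially the paper's route: both instantiate \eqref{eqn : resolvent} and \eqref{eqn : shoots} at $s=t$ after a spatial shift (resp.\ \eqref{eqn : resolvent2} and its analogue at $s=0$). Where you genuinely diverge is in the last two displays, which are the bulk of the paper's proof. For $G_q(t)$, the paper takes exactly the route you set aside as less transparent: writing $\bar\pi(a):=\pi((a,+\infty])$, it integrates its first display in $r$ to get $1-G_q(t)=\int_0^t W^{(q)}(t-a)\,\bar\pi(a)\,da\,/\,W^{(q)}(t)$, and then proves the convolution identity $W^{(q)}(t)-\int_0^t W^{(q)}(t-a)\,\bar\pi(a)\,da=1+q\int_0^t W^{(q)}(s)\,ds$ by showing that the right-hand side plus the convolution term has Laplace transform $1/(\psi(\lambda)-q)$, which characterises $W^{(q)}$ by \eqref{eqn : LT q-scale}. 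Your route --- computing $E_s(e^{-q\rho_t})=1-q\int_0^t u^q_t(s,y)\,dy$ from the resolvent, subtracting \eqref{eqn : two-sided-LT} to obtain the top-exit transform $Z^{(q)}(t-s)-Z^{(q)}(t)W^{(q)}(t-s)/W^{(q)}(t)$ with $Z^{(q)}(x):=1+q\int_0^x W^{(q)}(s)\,ds$, and setting $s=t$ --- is equally valid and re-derives a classical two-sided exit identity from the paper's stated background facts; what the paper's route buys is that the proposition's own first display feeds directly into the third, with a single Laplace-transform computation as the only analytic work, whereas yours buys independence from the convolution identity. For the final display the paper simply cites fluctuation theory ($\phi$ as the Laplace exponent of the dual ladder time process, plus the Wiener--Hopf factorisation), remarking that $\phi(q)=q/G_q(\infty)$ could also be obtained by integrating the second display. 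Your derivation --- the creeping formula $E_s(e^{-q\tau_0})=e^{-\phi(q)s}$ combined with $\psi(\phi(q))=q$ for the first equality, and the asymptotics $W^{(q)}(t)\sim c_q e^{\phi(q)t}$ applied to the third display for the second --- is correct and more explicit, at the cost of invoking the exponential growth of $W^{(q)}$, itself a standard citable fact from the same sources; your restriction to $q>0$ there is, if anything, more careful than the paper's blanket ``$q,t\ge 0$''.
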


\begin{proof}
The first two displays stem from instantiating \eqref{eqn : resolvent} (resp. \eqref{eqn : resolvent2}) and \eqref{eqn : shoots} at $s=t$, using the spatial homogeneity of L{\'e}vy processes (resp. $s=0$).

Writing $\bar{\pi}(x):= \pi((x,+\infty])$, $x>0$, we get from the first display
$$
E_0\left( e^{-q \tau_0^+}, -X_{\tau_0^+ -}\in da, \tau_0^+ <\tau_{-t} \right)=
\frac{W^{(q)}(t-a)}{W^{(q)}(t)}\, da\, \bar{\pi}(a) ,
$$
so that 
$$
1-G_q(t) = \frac{g_q(t)}{W^{(q)}(t)},
$$
where
$$
g_q(t):=\int_0^t da\,W^{(q)}(t-a)\, \bar{\pi}(a)\qquad t\ge 0.
$$
To prove the third display, it remains to prove that for all $t\ge 0$,
$$
W^{(q)}(t) - g_q (t) = 1+q\int_0^t W^{(q)}(s)\, ds.
$$
Now for any $\lbd >\phi(q)$, the Laplace transform of the non-negative mapping $h_q:t\mapsto g_q(t) +1+q\int_0^t W^{(q)}(s)\, ds$ evaluated at $\lbd$ is
\debeq
\intgen dt\,e^{-\lbd t}h_q(t) &=& \frac{1}{\lbd} + \frac{1}{\psi(\lbd)-q}\intgen dt\,e^{-\lbd t}\int_{(t,+\infty]}\pi(dr) + q\intgen dt\,e^{-\lbd t}\int_0^t ds\,W^{(q)}(s)\\
 &=& \frac{1}{\lbd} + \frac{1}{\psi(\lbd)-q}
\int_{(0,+\infty]}\pi(dr)\frac{ 1-e^{-\lbd r}}{\lbd} + q\intgen ds\,W^{(q)}(s) \frac{ e^{-\lbd s}}{\lbd} \\
&=& \frac{1}{\lbd}\left\{1  + \frac{\lbd -\psi(\lbd)}{\psi(\lbd)-q} + \frac{q}{\psi(\lbd)-q}\right\} = \frac{1}{\psi(\lbd)-q},
\fineq
which is the Laplace transform of $W^{(q)}$ and characterises it.

The last two equalities are classical results in fluctuation theory of L{\'e}vy processes \cite{B}. To be more specific, the first equality is the well-known fact that the inverse mapping of the Laplace exponent of a L{\'e}vy process without negative jumps is the Laplace exponent of its dual ladder time process. Since $q\mapsto G_q(\infty)= E_0\left(1-e^{-q \tau_0^+}\right)$ is the ladder time process of $X$, the Wiener--Hopf factorisation yields the second equality (which could also be proved in the same fashion as the third display, using the second display).
\end{proof}

\subsection{Summary statement with explicit formulae}

The analytical results of Proposition \ref{prop:Gq} can be applied straightforwardly to rephrase the conceptual results of Subsection \ref{subsec:lowerdm}, at the preference of the reader. The next statement is one of the practical ways of doing this. It provides explicit formulae, up to the knowledge (or numerical computation) of $\delta$-scale functions (occasionally via $G_\delta$, but then use Proposition \ref{prop:Gq}) and $\phi$ (which is fast to compute as the inverse mapping of $\psi$), for various marginals of interest of the splitting tree stopped when the first clock rings.

\begin{prop}
\label{last}
Let $n\ge 1$ and $y,t>0$.
The joint law of $T$ and $N_T$ is given by
$$
\PP(N_T=n, T\in dt)=
-\frac{n\delta}{b} \ .\
G_\delta'(t)\,
\left(1-G_\delta(t)\right)^{n-1}\,dt.
$$
As a consequence,
$$
\PP(N_T=n, T<y)=\frac{\delta}{b} \ 
\left(1-G_\delta(y)\right)^n,
$$
 with respective one-dimensional marginals
 $$
\PP(N_T=n)=\frac{\delta}{b} \ 
\left(1-G_\delta(\infty)\right)^n = 
\frac{\delta}{b} \ 
\left(1-\frac{\delta}{\phi(\delta)}\right)^n
\quad \mbox{ and }\quad\PP(T<y)=\frac{\delta}{b} \ .\
\frac{1-G_\delta(y)}{G_\delta(y)} .
$$
In particular,
$$
\PP(N_T=n\mid T=t)=
nG_\delta^2(t)\,
\left(1-G_\delta(t)\right)^{n-1}\quad \mbox{ and }\quad\PP(N_T=n\mid T<y)=
G_\delta(y)(1-G_\delta(y))^{n-1} .
$$
Also, the probability $p$ that $T<\infty$ equals
$$
p = 
\frac{\delta}{b} \ .\
\frac{1-G_\delta(\infty)}{G_\delta(\infty)}
= \frac{\phi(\delta) -\delta}{b} . 
$$
Conditional on $\{N_T=n, T<y\}$, ($y\le\infty$) the ages and residual lifetimes of the $n$ alive individuals at time $T$ are i.i.d., distributed as the r.v.\ $(A(y),R(y))$ (independent of $n$). If $y<\infty$,
\begin{multline*}
\PP(A(y)\in da, R(y)\in dr) = \frac{1}{1-G_\delta(y)} \ \frac{W^{(\delta)}(y-a)}{W^{(\delta)}(y)}\, da\, d\pi(a+r)\\
=\frac{W^{(\delta)}(y-a)}{W^{(\delta)}(y)-1-\delta\int_0^y W^{(\delta)}(s)\, ds}\, da\, d\pi(a+r).
\end{multline*}
If $y=\infty$,

$$
\PP(A(\infty)\in da, R(\infty)\in dr) = \frac{1}{1-G_\delta(\infty)} \ e^{-\phi(\delta)a}\, da\, d\pi(a+r)
=\frac{\phi(\delta)}{\phi(\delta)-\delta} \ e^{-\phi(\delta)a}\, da\, d\pi(a+r).
$$
\end{prop}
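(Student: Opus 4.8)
The plan is to derive Proposition~\ref{last} as a direct, essentially computational consequence of the conceptual results in Subsection~\ref{subsec:lowerdm} (Corollaries~4.1 and~4.3) combined with the explicit fluctuation-theory formulae of Proposition~\ref{prop:Gq}. Almost everything here is a matter of substituting the scale-function identities into the abstract expressions already established; no genuinely new probabilistic input is needed. Concretely, I would first recall from Corollary~4.1 that
$$
\PP(N_T=n, T\in dt)=\frac{n\delta}{b}\,E_0\!\left(e^{-\delta\tau_0^+},-\inf_{0\le s\le\tau_0^+}X_s\in dt\right)\left(E_0\!\left(e^{-\delta\tau_0^+},\tau_0^+<\tau_{-t}\right)\right)^{n-1},
$$
and then rewrite the two factors in terms of $G_\delta$. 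By the definition $G_q(t)=1-E_0(e^{-q\tau_0^+},\tau_0^+<\tau_{-t})$, the bracketed factor is exactly $(1-G_\delta(t))^{n-1}$; and since $E_0(e^{-\delta\tau_0^+},\tau_0^+<\tau_{-t})$ is differentiable with derivative $E_0(e^{-\delta\tau_0^+},-\inf_s X_s\in dt)/dt$ (the differentiability remark preceding Corollary~4.1), the first factor is $-G_\delta'(t)\,dt$. This yields the first displayed formula at once. Integrating over $(0,y)$ telescopes to $\frac{\delta}{b}(1-G_\delta(y))^n$, and the marginals in $n$ and in $y$ follow by letting $y\to\infty$ and by summing the geometric series over $n\ge1$; here I would invoke $G_\delta(\infty)=\delta/\phi(\delta)$ from the last line of Proposition~\ref{prop:Gq} to rewrite $1-G_\delta(\infty)$ as $1-\delta/\phi(\delta)$.

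Next I would handle the value of $p=\PP(T<\infty)$. Summing $\PP(N_T=n)=\frac{\delta}{b}(1-G_\delta(\infty))^n$ over $n\ge1$ gives a geometric sum equal to $\frac{\delta}{b}\,\frac{1-G_\delta(\infty)}{G_\delta(\infty)}$; substituting $G_\delta(\infty)=\delta/\phi(\delta)$ collapses this to $(\phi(\delta)-\delta)/b$. The conditional laws $\PP(N_T=n\mid T<y)$ and $\PP(N_T=n\mid T=t)$ are then obtained by dividing the joint expressions by the appropriate marginals, and the stated geometric forms drop out after cancelling the common factor $\delta/b$ and the normalising powers of $G_\delta$.

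For the joint law of ages and residual lifetimes, I would start from Corollary~4.3 (the reshuffled version), whose conditional statement asserts that, given $\{N_T=n,T<y\}$, the pairs $(A_i',R_i')$ are i.i.d.\ with a law independent of $n$. The common law is read off by isolating a single factor in the product appearing in Corollary~4.3 and normalising: the unnormalised one-individual weight is $E_0(e^{-\delta\tau_0^+},-X_{\tau_0^+-}\in da,X_{\tau_0^+}\in dr,\tau_0^+<\tau_{-y})$, and the normalising constant is its total mass, namely $E_0(e^{-\delta\tau_0^+},\tau_0^+<\tau_{-y})=1-G_\delta(y)$. Now I substitute the first display of Proposition~\ref{prop:Gq}, which gives this weight as $\frac{W^{(\delta)}(y-a)}{W^{(\delta)}(y)}\,da\,d\pi(a+r)$; dividing by $1-G_\delta(y)$ and using the third display $G_\delta(y)=\frac{1+\delta\int_0^y W^{(\delta)}(s)\,ds}{W^{(\delta)}(y)}$ to expand $1-G_\delta(y)=\frac{W^{(\delta)}(y)-1-\delta\int_0^y W^{(\delta)}(s)\,ds}{W^{(\delta)}(y)}$ produces the two equivalent forms of $\PP(A(y)\in da,R(y)\in dr)$. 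The case $y=\infty$ is the same computation using instead the second display $E_0(e^{-\delta\tau_0^+},-X_{\tau_0^+-}\in da,X_{\tau_0^+}\in dr)=e^{-\phi(\delta)a}\,da\,d\pi(a+r)$ and $1-G_\delta(\infty)=1-\delta/\phi(\delta)=(\phi(\delta)-\delta)/\phi(\delta)$.

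The main obstacle is not conceptual but bookkeeping: one must carefully verify that the normalisation factors are precisely the right powers of $G_\delta(y)$ and $1-G_\delta(y)$, so that the independence of the common law from $n$ (already guaranteed by the structure of Corollary~4.3) is faithfully preserved through the substitution, and that the ``$da\,d\pi(a+r)$'' measure-theoretic factors match on both sides. I would be especially careful that the single highlighted factor carrying the constraint $-\inf_s X_s\in dt$ (present in the unintegrated Corollary~4.3) disappears correctly upon integrating $t$ over $(0,y)$, leaving a clean product of $n$ identical factors, which is exactly what makes the conditional law i.i.d.\ and $n$-independent.
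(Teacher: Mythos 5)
Your proposal is correct and takes essentially the same route as the paper: the paper gives no separate proof of Proposition \ref{last}, saying only that the formulae of Proposition \ref{prop:Gq} can be ``applied straightforwardly'' to rephrase the conceptual results of Subsection \ref{subsec:lowerdm}, and your write-up is exactly that substitution carried out --- Corollary 4.1 rewritten via $G_\delta$ and its derivative, Corollary 4.3 normalised by $1-G_\delta(y)$, and the identities $G_\delta(\infty)=\delta/\phi(\delta)$ and $1-G_\delta(y)=\bigl(W^{(\delta)}(y)-1-\delta\int_0^y W^{(\delta)}(s)\,ds\bigr)/W^{(\delta)}(y)$ from Proposition \ref{prop:Gq}. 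All the individual computations (the telescoping integration over $(0,y)$, the geometric sums, and the two forms of the age/residual-lifetime law) check out.
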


\section{A (more general) model of epidemics}

As in \cite{T,BWTB}, we aim to model the spread of some antibiotic resistant bacteria like MRSA (Methicillin--resistant \textit{Staphylococcus aureus}) in a hospital. Once in a while, a patient is colonised by MRSA (presumably by introduction from outside) and this may cause an outbreak in the hospital. This outbreak is only detected at the first time $T$ when one of the carriers declares herself, i.e., when the first symptoms appear in a carrier, or at the first positive medical exam of a carrier.

We assume that
\begin{itemize}
\item 
patients have i.i.d\  lengths of stay in the hospital, all distributed as some positive random variable $K$ with finite expectation;
\item 
the outbreak starts with the infection of a randomly chosen patient;
\item
the length of stay is not influenced by whether or not an individual carries MRSA (neutrality, or exchangeability assumption);
\item
during an outbreak no further introductions from outside occur (no immigration);
\item
carriers are infective from the first time they were infected till their departure from the hospital;
\item
while infective, patients independently transmit MRSA to other individuals at times of a Poisson process with parameter $b$ (susceptible individuals are always assumed to be in excess, so that effects of the finite size of the hospital are ignored);
\item
as a consequence of the renewal theorem (assuming stationarity of the regenerative set of arrivals at the hospital), the length of stay of a patient conditional on infection is a size-biased version of $K$, and the time at which she is infected is independent, uniformly distributed during her stay;
\item
each patient can be detected to be a carrier only after an independent exponential time with parameter $\delta$ running from the beginning of her infection (time of screening or of developing symptoms in this patient). The first time $T$ when a carrier is detected is called \emph{detection time};
\item
At detection time, all patients in the hospital are screened with a perfect test, so all carriers at $T$ are immediately identified. 
\end{itemize}
\begin{rem}
\label{rem}
The second assumption can be disputable, since MRSA is often introduced by a patient who already carries MRSA before entering the hospital (personal communication with Martin Bootsma). Changing this assumption on introduction of MRSA for a more realistic one would make the analysis harder, although possible, and obscure the illustrative character of the example provided in this section.
\end{rem}

It is not possible to obtain useful data from patients who already left the hospital at the moment of detection. Indeed, most carriers leaving the hospital will soon lose MRSA because - in the absence of antibiotic pressure - the antibiotic resistant strains will soon be outcompeted by antibiotic susceptible strains.
Thus, our goal is to infer the parameters of the epidemics by using available medical data belonging to the detected carriers. 


Thus the model is a Crump--Mode--Jagers branching process where every birth event is interpreted as an infection, and individuals are endowed with i.i.d.\ bivariate r.v.\ distributed as the pair $(U,V)$, with $V$ the lifetime (as an infective), i.e., the time between infection and departure from the hospital, and $U$ the time already spent in the hospital before infection. Individuals ``give birth'' at constant rate $b$ during their (infective) lifetime (length $V$) to copies of themselves. Finally, the joint law of $(U,V)$ is given by
\begin{equation}\label{sizebias}
\EE(f(U,V)) = m^{-1} \int_{(0,\infty)} \PP(K\in dz) \int_{(0,z)} dx\,f(x, z-x),
\end{equation}
where $m:=\EE(K)$ and $f$ is any non-negative Borel function.



At detection time $T$, all carriers $i=1,\ldots,N_T$ are identified and we focus on the following medical data belonging to them 
\begin{itemize}
\item $U_i$ is the time already spent in the hospital by carrier $i$ upon her infection;
\item $A_i$ is the time elapsed between infection of carrier $i$ and $T$ (`age' of infection) ;
\item $R_i$ is the remaining length of stay of carrier $i$ in the hospital after $T$ (`residual lifetime' of infection); 
\item $V_i:=A_i+R_i$ is the total infective lifetime of carrier $i$;
\item $H_i:=U_i+A_i$ is the time elapsed between entrance in the hospital of carrier $i$ and time $T$.
\end{itemize}
Note that $(U_i,V_i)$ is merely the typical pair $(U,V)$ attached to carrier $i$, and that $A_i$ and $R_i$ have the interpretations given in the previous section. The quantities of empirical interest are the random variables $H_i$, which should be easy to obtain from the hospital administrations. Also, the distribution of $K$ should be easy to estimate from hospital data.


 It is not difficult to see that with this extra information, Proposition  \ref{last} still holds with $\mu$ (and hence $\psi$, $\phi$, $W^{(\delta)}$,...) defined thanks to \eqref{sizebias} as 
 \begin{equation}
 \label{V}
\mu(dx) :=\PP(V \in dx) = m^{-1}\,\PP(K>x)\, dx.
 \end{equation}
Actually, we also have the following straightforward extension of Proposition  \ref{last}.
  \begin{cor}
  Conditional on $\{N_T=n\}$, the triples $(U_i, A_i, R_i)$ of the $n$ (randomly labelled)  carriers at time $T$ are i.i.d., distributed as the r.v.\ $(U,A,R)$ (independent of $n$), where
$$
\EE(f(U,A,R)) =\frac{b}{m}\,\frac{\phi(\delta)}{\phi(\delta)-\delta} \ \int_{u=0}^\infty du \int_{a=0}^\infty da \int_{z=u+a}^\infty \PP(K\in dz) \,e^{-\phi(\delta)a}\, f(u,a,z-u-a).
$$
In particular, the times $H_i=U_i+A_i$ spent in the hospital up to time $T$ are i.i.d., distributed as the r.v.\ $H$
\begin{equation}\label{hdist}
\PP(H\in dy) = \frac{b/m}{\phi(\delta)-\delta}\  \PP(K>y) \,\big(1-e^{-\phi(\delta)y}\big)\, dy.
\end{equation}
  \end{cor}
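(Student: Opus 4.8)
The plan is to derive this corollary from Proposition~\ref{last} (applied with $\mu$ given by \eqref{V}) by appending to each alive carrier its pre-infection sojourn $U_i$, using the fact that $U_i$ is a mark that influences the dynamics only through the infective lifetime $V_i=A_i+R_i$. First I would invoke Proposition~\ref{last} in the case $y=\infty$ to record that, conditional on $\{N_T=n\}$, the (randomly labelled) pairs $(A_i,R_i)$, $i=1,\ldots,n$, are i.i.d.\ with common law
\[
\PP(A\in da,R\in dr)=\frac{\phi(\delta)}{\phi(\delta)-\delta}\,e^{-\phi(\delta)a}\,da\,d\pi(a+r)
=\frac{b}{m}\,\frac{\phi(\delta)}{\phi(\delta)-\delta}\,e^{-\phi(\delta)a}\,\PP(K>a+r)\,da\,dr,
\]
where the second equality uses $\pi=b\mu$ together with \eqref{V}.

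The crucial probabilistic step is the conditional law of the $U_i$. By the neutrality (exchangeability) assumption, the length of stay of a patient does not affect whether or when she is infected, nor the transmission and detection mechanisms; these depend on a carrier only through her infective lifetime $V_i$ and her own independent clock. Hence the marked genealogical tree carrying the lifetimes $V_i$ and the clocks, and therefore the detection time $T$, the event $\{N_T=n\}$ and the pairs $(A_i,R_i)$, are all independent of the sojourns $U_1,\ldots,U_n$ given the $V_i$. Since $(U_i,V_i)$ is an independent copy of $(U,V)$, I conclude that conditional on $\{N_T=n\}$ and on $(A_i,R_i)_{i=1}^n$ the variables $U_1,\ldots,U_n$ are independent, with $U_i$ distributed as $U$ given $V=A_i+R_i$. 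Making this reduction rigorous, i.e.\ checking that attaching the marks $U_i$ does not perturb the law of the marked tree stopped at $T$, so that the conditioning collapses to conditioning each $U_i$ on its own $V_i$, is the point I expect to demand the most care.

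Granting this, I would read off from \eqref{sizebias} the joint law $\PP(U\in du,V\in dv)=m^{-1}\,\PP(K\in d(u+v))\,du$, hence the conditional law $\PP(U\in du\mid V=v)=\PP(K\in d(u+v))/\PP(K>v)$. Multiplying this by the law of $(A_i,R_i)$ above makes the factor $\PP(K>a+r)$ cancel, leaving
\[
\PP(U\in du,A\in da,R\in dr)=\frac{b}{m}\,\frac{\phi(\delta)}{\phi(\delta)-\delta}\,e^{-\phi(\delta)a}\,\PP(K\in d(u+a+r))\,da\,dr.
\]
The substitution $z=u+a+r$ then produces the announced expression for $\EE(f(U,A,R))$, and the i.i.d.\ statement for the triples follows because the $(A_i,R_i)$ are i.i.d.\ and each $U_i$ is conditionally independent given $(A_i,R_i)$ with a law depending only on $A_i+R_i$.

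Finally, for the marginal of $H=U+A$ I would integrate out $R$, replacing $\int_{z=u+a}^\infty\PP(K\in dz)$ by $\PP(K>u+a)$, then pass to the variables $(y,a)$ with $y=u+a$ and carry out the elementary integral $\int_0^y e^{-\phi(\delta)a}\,da=(1-e^{-\phi(\delta)y})/\phi(\delta)$. The factor $\phi(\delta)$ cancels against the prefactor and one is left precisely with \eqref{hdist}.
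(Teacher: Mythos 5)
Your proof is correct and follows precisely the route the paper intends: the paper gives no explicit argument, calling the corollary a ``straightforward extension'' of Proposition \ref{last} with $\mu$ replaced by \eqref{V}, and your argument---invoking Proposition \ref{last} at $y=\infty$, attaching the sojourns $U_i$ as marks that are conditionally independent given the infective lifetimes $V_i=A_i+R_i$ (by neutrality, since the tree, the clocks, and hence $T$, $N_T$ and the $(A_i,R_i)$ depend on the pairs $(U_j,V_j)$ only through the $V_j$), and computing the conditional law of $U$ given $V$ from \eqref{sizebias}---is exactly the computation left to the reader. The cancellation of $\PP(K>a+r)$, the change of variables $z=u+a+r$, and the final integration $\int_0^y e^{-\phi(\delta)a}\,da$ yielding \eqref{hdist} are all carried out correctly.
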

\begin{rem}\label{remH}
From the definition of $\phi(a)$ we deduce that $$\delta = \phi(\delta) -b\int_0^{\infty} \mu(dx) (1-e^{-\phi(\delta)x}) \Leftrightarrow \frac{b}{\phi(\delta)-\delta} = \frac{1}{\int_0^{\infty} \mu(dx) (1-e^{-\phi(\delta)x})}$$
and (\ref{hdist}) might be rewritten as
$$
\PP(H\in dy) = \frac{1/m}{\int_0^{\infty} \mu(dx) (1-e^{-\phi(\delta)x})}\  \PP(K>y) \,\big(1-e^{-\phi(\delta)y}\big)\, dy.
$$
Finally filling in (\ref{V}) gives
\begin{equation}\label{hdistdif}
\PP(H\in dy) = \frac{\PP(K>y) \,\big(1-e^{-\phi(\delta)y}\big)\, dy}{\int_0^{\infty} \PP(K>x) (1-e^{-\phi(\delta)x})dx}.
\end{equation}
The rhs depends on $K$ (which might be estimated from independent hospital data) and $\phi(\delta)$ only.
\end{rem}

Now assume that various outbreaks in various hospitals are observed at their detection times. If the sizes of outbreaks (all distributed as $N_T$) are the only observable statistics, then, as stressed in \cite{BWTB,T}, the fact that $N_T$ is geometrically distributed only allows for the estimation of a single epidemiological parameter.
Enlarging this information to, e.g., the times $H_i$ spent in the hospital before $T$, we can hope to make finer inferences on the dynamical characteristics of those epidemics.

Assume that $n$ outbreaks are observed of sizes $x_1, x_2, \cdots, x_n \in \mathbb{N}_{>0}$ and $s(n) := \sum_{i=1}^n x_i$ carriers are detected, which at the time of detection have been in the hospital for $y_1,y_2,\cdots, y_{s(n)} \in \mathbb{R}_+$ time units. We also assume that, since the distribution of $K$ may be estimated from independent hospital data, its distribution is known exactly.

Using Remark \ref{remH} the likelihood of the observations $L(b,\delta;x_1,\cdots,x_n, y_1, \cdots y_{s(n)})$ is given by 
\begin{multline}\label{likeli}
L(b,\delta) = \left(\prod_{i=1}^n \frac{\delta}{\phi(\delta)} \left(1-\frac{\delta}{\phi(\delta)}\right)^{x_i-1}  \right) \
\prod_{j=1}^{s(n)} \PP(H\in dy_j) \\
= \left(\frac{\delta}{\phi(\delta)}\right)^n \left(1-\frac{\delta}{\phi(\delta)}\right)^{{s(n)}-n} \
\prod_{j=1}^{s(n)} \frac{\PP(K>y_j) \,\big(1-e^{-\phi(\delta)y_j}\big)\, dy_j}{\int_0^{\infty} \PP(K>x) (1-e^{-\phi(\delta)x})dx} .
\end{multline}
We write  $L= L_1L_2$, where
$$L_1(b,\delta) = \left(\frac{\delta}{\phi(\delta)}\right)^n \left(1-\frac{\delta}{\phi(\delta)}\right)^{{s(n)}-n}$$
and
$$L_2(b,\delta) = \prod_{j=1}^{s(n)} \frac{\PP(K>y_j) \,\big(1-e^{-\phi(\delta)y_j}\big)\, dy_j}{\int_0^{\infty} \PP(K>x) (1-e^{-\phi(\delta)x})dx} .$$
We observe that $L_1$ is the likelihood function for $n$ realisations $x_1,x_2,\cdots, x_n$ of i.i.d\ geometric random variables with parameter $g_1:=g_1(b,\delta):= \delta/\phi(\delta)$, while $L_2$ (assuming that the distribution of $K$ is exactly known) only depends on $g_2:=g_2(b,\delta):=\phi(\delta)$.
Observe that $\delta = g_1g_2$ and (recalling $m:=\EE(K)$), 
$$b =  \frac{\phi(\delta)-\delta}{\int_0^{\infty} \mu(dx) (1-e^{-\phi(\delta)x})} = \frac{m g_2(1-g_1)}{\int_0^{\infty} \PP(K>x)\, (1-e^{-g_2x})dx }.$$ 
Furthermore, reparametrization of $L_1(b,\delta)$ in a function of $g_1$ and $g_2$  results in a function which is independent of $g_2$, while reparametrization of  $L_2(b,\delta)$ 
in a function of $g_1$ and $g_2$ results in a function which is independent of $g_1$. 
It is straightforward to deduce that the maximum likelihood estimator (MLE) of $g_1$, say $\hat{g}_1$, is given by  $$\hat{g}_1=\frac{n}{s(n)},$$ while, since $L_1$ does not depend on $g_2$, the MLE of $g_2$, say $\hat{g}_2$, is given by 
$$
\hat{g}_2 = \mbox{arg}\max_{g_2} \prod_{j=1}^{s(n)} \frac{\PP(K>y_j) \,\big(1-e^{-g_2y_j}\big)\, dy_j}{\int_0^{\infty} \PP(K>x) (1-e^{-g_2x})dx} .
$$
Standard theory on maximum likelihood estimation gives that the MLEs of $b$, say $\hat{b}$,  and $\delta$, say $\hat{\delta}$, are given by 
$$
\hat{b}=\frac{m \hat{g}_2(1-\hat{g}_1)}{\int_0^{\infty} \PP(K>x)\, (1-e^{-\hat{g}_2x})dx}\quad\mbox{ and }\quad\hat{\delta}=\hat{g}_1 \hat{g}_2.$$

If $H$ is exponentially distributed with parameter $\nu$ \cite{BWTB,T}, then $\hat{g}_1 = n/s(n)$, while 
\begin{multline*}
\hat{g}_2 = \mbox{arg}\max_{g_2} \prod_{j=1}^{s(n)} \left(\frac{(\nu + g_2) \nu}{g_2} (1- e^{-g_2y_j})\, e^{- \nu y_j} \right) = \\
\mbox{arg}\max_{g_2}\left( s(n) \log\left(1+\frac{\nu  }{g_2}\right) + \sum_{j=1}^{s(n)} \log\big(1- e^{-g_2 y_j}\big) \right)
\end{multline*}
and the MLE of $b$ and $\delta$ are given by $\hat{b}=(1-\hat{g}_1)(\nu + \hat{g}_2)$ and $\hat{\delta}=\hat{g}_1 \hat{g}_2$.

Note that it is possible to allow for differences in the distributions of lengths of stay (the random variable $K$) and infection rates (the parameter $b$) for different hospitals, while keeping the biologically governed rate of onset of symptoms ($\delta$) the same for all hospitals. In that case we use the likelihood (\ref{likeli}) with hospital specific parameters and observations, for estimation.

Derivation of similar formulae for models relaxing too simplistic assumptions (see Remark \ref{rem}), and applications  to real hospital data,
will be addressed in a future work.


\paragraph{Acknowledgements.} A.L. is funded by project MANEGE `Mod\`eles
Al\'eatoires en \'Ecologie, G\'en\'etique et \'Evolution'
09-BLAN-0215 of ANR (French national research agency).
P.T.\ is funded by Vetenskapsr{\aa}det (Swedish research counsel) projectnr.\ 2010--5873.

\end{document}